\journal{Journal of Computational and Applied Mathematics}
\newcommand{\tnorm}{\@ifstar\@tnorms\@tnorm}
\newcommand{\@tnorms}[1]{%
  \left|\mkern-1.5mu\left|\mkern-1.5mu\left|
   #1
  \right|\mkern-1.5mu\right|\mkern-1.5mu\right|
}
\newcommand{\@tnorm}[2][]{%
  \mathopen{#1|\mkern-1.5mu#1|\mkern-1.5mu#1|}
  #2
  \mathclose{#1|\mkern-1.5mu#1|\mkern-1.5mu#1|}
}
\newcommand{\jump}[1]{\llbracket #1 \rrbracket}
\newtheorem{theorem}{Theorem}
\newtheorem{lemma}{Lemma}
\newtheorem{proposition}{Proposition}
\newdefinition{remark}{Remark}
\newproof{proof}{Proof}
\begin{document}
\begin{frontmatter}
\title{An embedded--hybridized discontinuous Galerkin method for
  the coupled Stokes--Darcy system}
\author[AC]{Aycil Cesmelioglu\fnref{label1}}
\ead{cesmelio@oakland.edu}
\fntext[label1]{\url{https://orcid.org/0000-0001-8057-6349}}
\address[AC]{Department of Mathematics and Statistics, Oakland
  University, Michigan, USA}

\author[SR]{Sander Rhebergen\corref{cor1}\fnref{label2}}
\ead{srheberg@uwaterloo.ca}
\fntext[label2]{\url{https://orcid.org/0000-0001-6036-0356}}
\address[SR]{Department of Applied Mathematics, University of
  Waterloo, Canada}

\author[GNW]{Garth N. Wells\fnref{label3}}
\ead{gnw20@cam.ac.uk}
\fntext[label3]{\url{https://orcid.org/0000-0001-5291-7951}}
\address[GNW]{Department of Engineering, University of Cambridge,
  United Kingdom}

\begin{abstract}
  We introduce an embedded--hybridized discontinuous Galerkin (EDG--HDG)
  method for the coupled Stokes--Darcy system. This EDG--HDG method is a
  pointwise mass-conserving discretization resulting in a
  divergence-conforming velocity field on the whole domain. In the
  proposed scheme, coupling between the Stokes and Darcy domains is
  achieved naturally through the EDG--HDG facet variables. \emph{A
  priori} error analysis shows optimal convergence rates, and that the
  velocity error does not depend on the pressure. The error analysis is
  verified through numerical examples on unstructured grids for
  different orders of polynomial approximation.
\end{abstract}
\begin{keyword}
  Stokes--Darcy flow \sep Beavers--Joseph--Saffman \sep hybridized
  methods \sep discontinuous Galerkin \sep multiphysics.
  \MSC[2010]
  65N12 \sep 
  65N15 \sep 
  65N30 \sep 
  76D07 \sep 
  76S99.
\end{keyword}
\end{frontmatter}
\section{Introduction}
\label{sec:introduction}

Modelling adjacent free flow and porous media flow is important for a
range of applications, e.g., transport of drugs via blood flow in
vessels in biomedical engineering, and transport of pollutants via
surface/groundwater flow in environmental engineering. The problem can
be stated as a system of partial differential equations, with free flow
governed by the Stokes equations and porous media flow governed by
Darcy's equations. The interactions at the boundary between the free
flow and porous media flow regions were specified by \cite{Beavers:1967,
Saffman:1971}, and were mathematically justified in~\cite{Mikelic:2000}.
We refer to \cite{Discacciati:2009} for an overview of the model.

Well-posedness of the weak formulation of the Stokes--Darcy problem can
be found in \cite{Discacciati:2002} for the primal form, and in
\cite{Layton:2002} for the primal--mixed form. Many different finite
element and mixed finite element methods have been proposed to
discretize the Stokes--Darcy problem for both formulations,
e.g.~\cite{Discacciati:2002, Layton:2002, Burman:2005, Cao:2010,
Gatica:2009, Camano:2015, Marquez:2015}. Other devised finite element
methods include discontinuous Galerkin (DG) methods
\cite{Cesmelioglu:2009, Girault:2009, Girault:2014, Lipnikov:2014,
Riviere:2005, RiviereYotov:2005}, hybridizable discontinuous Galerkin
(HDG) methods \cite{Egger:2013, Fu:2018, Gatica:2017, Igreja:2018}, weak
Galerkin methods (WG) \cite{Chen:2016}, and weak virtual element methods
(WVEM) \cite{Wang:2019}.

We develop a numerical scheme for which the velocity field is
divergence-conforming on the whole domain and for which mass is
conserved pointwise. Finite element methods that satisfy these
properties were proposed in \cite{Girault:2014, Kanschat:2010} where
they are referred to as `strongly conservative'. For the Stokes region
\cite{Girault:2014, Kanschat:2010} used a divergence-conforming DG
space for the velocity and a standard DG space for the pressure. In
the Darcy region they used a mixed finite element method. It is
well-known, however, that DG methods can be expensive due to the large
number of degrees-of-freedom on a given mesh compared to other
methods. To reduce the number of globally coupled degrees-of-freedom,
\cite{Fu:2018} proposed an HDG method for the Stokes region using a
divergence-conforming finite element space for the velocity. Their
method results in less globally coupled degrees-of-freedom compared to
standard HDG methods as they only enforce continuity of the tangential
direction of the facet velocity. Additionally, to reduce the problem
size even further, they applied the `projected jumps' method in which
the polynomial degree of the tangential facet velocity is reduced by
one compared to the cell velocity approximation (see also
\cite{Lehrenfeld:2016}).

In this paper we propose an embedded--hybridized discontinuous Galerkin
(EDG--HDG) finite element method of the primal--mixed formulation of the
Stokes--Darcy problem on the whole domain. The EDG--HDG method uses a
continuous trace velocity approximation and a discontinuous trace
pressure approximation.  Due to the continuous trace velocity
approximation, the number of globally coupled degrees-of-freedom of the
EDG--HDG method is fewer than for a traditional HDG method. However, the
main motivation for an EDG--HDG discretization is not that the problem
size is smaller, but that `continuous' discretizations are generally
better suited to fast iterative solvers. This was demonstrated for the
Stokes problem in \cite{Rhebergen:2019}, where CPU time and iteration
count to convergence was reduced significantly compared to a hybridized
method using only discontinuous facet approximations. We will show that
the EDG--HDG method proposed in this paper is pointwise mass-conserving
and that the resulting velocity field is divergence-conforming. We
present furthermore an analysis of the proposed EDG--HDG method for the
Stokes--Darcy problem, proving well-posedness, and optimal \emph{a
priori} error estimates.

The remainder of this paper is organized as follows. In
\cref{sec:stokesdarcy} we briefly introduce the coupled Stokes--Darcy
problem. The EDG--HDG method to this problem is presented in
\cref{sec:hdgstokesdarcy}. Consistency, continuity and well-posedness
are shown in \cref{sec:consiscontwell} while the main results of this
paper, an \emph{a priori} error analysis, is presented in
\cref{sec:erroranalysisstokesdarcy}. Numerical simulations support our
theoretical results in \cref{sec:numerical_examples}, and conclusions
are drawn in \cref{sec:conclusions}.

\section{The Stokes--Darcy system}
\label{sec:stokesdarcy}

Let $\Omega \subset \mathbb{R}^{\rm dim}$ be a bounded polygonal domain
with ${\rm dim}=2,3$, boundary $\partial\Omega$ and boundary outward
unit normal~$n$. We assume that $\Omega$ is divided into two
non-overlapping regions, $\Omega^s$ and $\Omega^d$, such that $\Omega =
\Omega^s \cup \Omega^d$ and $\Omega^s$ and $\Omega^d$ are a union of
polygonal subdomains. We denote the polygonal interface between
$\Omega^s$ and $\Omega^d$ by $\Gamma^I$. Furthermore, we define the
external boundary of $\Omega^s$ by $\Gamma^s := \partial \Omega \cap
\partial \Omega^s$, and the external boundary of $\Omega^d$ by $\Gamma^d
:= \partial \Omega \cap \partial \Omega^d$. See
\cref{fig:domaindescription}.

\begin{figure}
  \centering
  \includegraphics[width=0.4\textwidth]{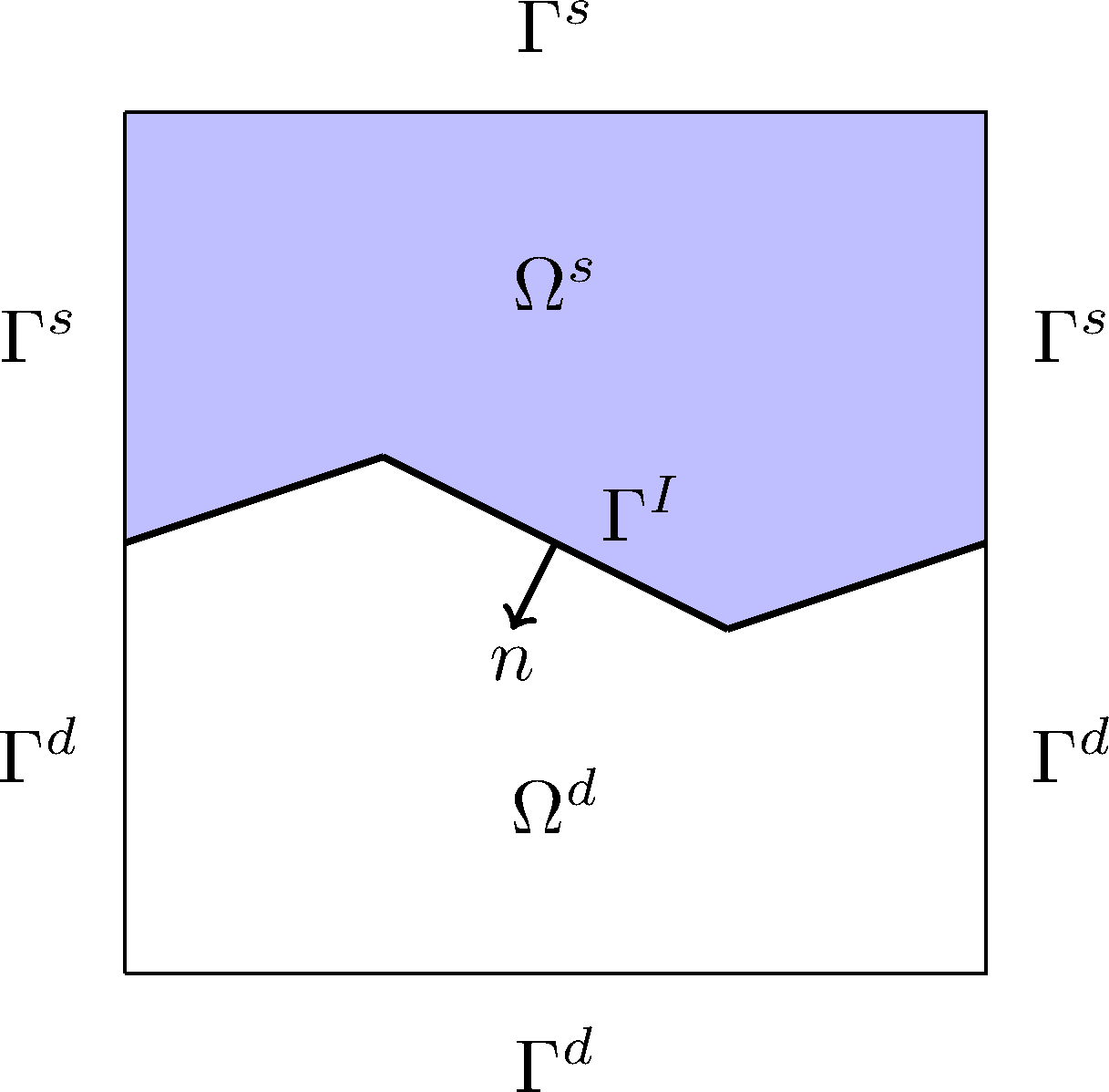}
  \caption{A depiction of the Stokes--Darcy region.}
  \label{fig:domaindescription}
\end{figure}

Given the kinematic viscosity $\mu\in\mathbb{R}^+$, forcing term $f^s
:\Omega^s \to \mathbb{R}^{\rm dim}$, permeability constant $\kappa \in
\mathbb{R}^+$ and the source/sink term $f^d :\Omega^d \to \mathbb{R}$,
the Stokes--Darcy system for the velocity field $u :\Omega \to
\mathbb{R}^{\rm dim}$ and pressure $p : \Omega \to \mathbb{R}$ is given
by
\begin{subequations}
  \label{eq:system}
  \begin{align}
    \label{eq:momentum}
    -\nabla\cdot 2\mu\varepsilon(u) + \nabla p &= f^s & & \text{in}\ \Omega^s,
    \\
    \label{eq:d_velocity}
    \kappa^{-1}u + \nabla p &= 0 & & \text{in}\ \Omega^d,
    \\
    \label{eq:mass}
    - \nabla\cdot u &= \chi^d f^d & & \text{in}\ \Omega,
    \\
    \label{eq:bc_s}
    u &= 0 & & \text{on}\ \Gamma^s,\\
    \label{eq:bc_d}
    u\cdot n &= 0 & & \text{on}\ \Gamma^d,
  \end{align}
\end{subequations}
where $\varepsilon(u) := \del{\nabla u + (\nabla u)^T}/2$ is the strain
rate tensor and $\chi^d$ is the characteristic function that has the
value 1 in $\Omega^d$ and 0 in $\Omega^s$. We will also frequently
denote the velocity and pressure in $\Omega^j$ by $u^j$ and $p^j$,
respectively, for $j = s,d$.

Let $n$ denote the unit normal vector on the interface between the two
domains, $\Gamma^I$, pointing outwards from~$\Omega^s$. On the interface
$\Gamma^I$ we prescribe
\begin{subequations}
  \label{eq:interface}
  \begin{align}
    \label{eq:bc_I_u}
    u^s\cdot n &= u^d\cdot n & & \text{on}\ \Gamma^I,
    \\
    \label{eq:bc_I_p}
    p^s - 2\mu\varepsilon(u^s)n\cdot n &= p^d & & \text{on}\ \Gamma^I,
    \\
    \label{eq:bc_I_slip}
    -2\mu\del{\varepsilon(u^s)n}^t &= \alpha \kappa^{-1/2}(u^s)^t & & \text{on}\ \Gamma^I,
  \end{align}
\end{subequations}
where the tangential component of a vector $w$ is denoted by $(w)^t := w
- (w\cdot n)n$. \Cref{eq:bc_I_slip} is the Beavers--Joseph--Saffman law
\cite{Beavers:1967, Saffman:1971}, where $\alpha > 0$ is an
experimentally determined parameter.

\section{The embedded--hybridized discontinuous Galerkin method}
\label{sec:hdgstokesdarcy}

We present now an embedded--hybridized discontinuous Galerkin (EDG--HDG)
method for the Stokes--Darcy system \cref{eq:system,eq:interface}, and
establish some of its key properties.

\subsection{Preliminaries}

For $j = s, d$, let $\mathcal{T}^j := \cbr{K}$ be a triangulation of
$\Omega^j$ into non-overlapping cells $K$. For brevity, we consider the
case of matching meshes at the interface~$\Gamma^I$. Furthermore, let
$\mathcal{T} := \mathcal{T}^s \cup \mathcal{T}^d$. The diameter of a
cell $K$ is denoted by $h_K$ and $h$ denotes the maximum diameter over
all cells. The outward unit normal vector on the boundary of a cell,
$\partial K$, is denoted by~$n$. An interior facet $F$ is shared by
adjacent cells, $K^+$ and $K^-$, while a boundary facet is part of
$\partial K$ that lies on~$\partial\Omega$. The set and union of all
facets are denoted by $\mathcal{F} := \cbr{F}$ and $\Gamma_0$,
respectively. By $\mathcal{F}^I$ we denote the set of all facets that
lie on $\Gamma^I$. For $j=s,d$, we denote by $\mathcal{F}^j$ the set of
all facets that lie in $\overline{\Omega}^j$. Finally, for $j = s, d$,
we denote the union of all facets in $\overline{\Omega}^j$
by~$\Gamma_0^j$.

We consider the following discontinuous Galerkin finite element function
spaces on~$\Omega$,
\begin{equation}
  \label{eq:DGcellspaces}
  \begin{split}
    V_h &:= \cbr[1]{v_h \in \sbr[0]{L^2(\Omega)}^{\rm dim} : \ v_h \in
      \sbr[0]{P_k(K)}^{\rm dim} \ \forall\ K \in \mathcal{T}},
    \\
    Q_h &:= \cbr[1]{q_h \in L^2(\Omega) : \ q_h \in P_{k-1}(K) \
      \forall \ K \in \mathcal{T}}\cap L^2_0(\Omega),
    \\
    Q_h^j &:= \cbr[1]{q_h \in L^2(\Omega^j) : \ q_h \in P_{k-1}(K) \
      \forall \ K \in \mathcal{T}^j},\quad j=s,d,
  \end{split}
\end{equation}
where $P_k(D)$ denotes the space of polynomials of degree $k$ on
domain~$D$ and $L^2_0(\Omega) := \{q \in L^2(\Omega) : \int_{\Omega} q
\dif x = 0\}$. On $\Gamma_0^s$ and $\Gamma_0^d$, we consider the finite
element spaces:
\begin{equation}
  \label{eq:DGfacetspaces}
  \begin{split}
    \bar{V}_h &:= \cbr[1]{\bar{v}_h \in \sbr[0]{L^2(\Gamma_0^s)}^{\rm dim}:\
      \bar{v}_h \in \sbr[0]{P_{k}(F)}^{\rm dim}\ \forall\ F \in \mathcal{F}^s,\
      \bar{v}_h = 0 \ \mbox{on}\ \Gamma^s} \cap \sbr[1]{C^0(\Gamma_0^s)}^{\rm dim},
    \\
    \bar{Q}_h^j &:= \cbr[1]{\bar{q}_h^j \in L^2(\Gamma_0^j) : \ \bar{q}_h^j
      \in P_{k}(F) \ \forall\ F \in \mathcal{F}^j},\quad j=s,d.
  \end{split}
\end{equation}
Note that $\bar{q}_h^s \in \bar{Q}_h^s$ and $\bar{q}_h^d \in
\bar{Q}_h^d$ do not necessarily coincide on the interface~$\Gamma^I$.
Note also that functions in $\bar{V}_h$ are continuous on $\Gamma_0^s$,
while functions in $\bar{Q}_h^j$ are discontinuous on $\Gamma_0^j$, for
$j=s,d$.

For notational purposes, we introduce the spaces $\boldsymbol{V}_h :=
V_h \times \bar{V}_h$, $\boldsymbol{Q}_h := Q_h \times \bar{Q}_h^s
\times \bar{Q}_h^d$ and $\boldsymbol{Q}_h^{j} := Q_h^j \times
\bar{Q}_h^j$ for $j=s,d$. Function pairs in $\boldsymbol{V}_h$,
$\boldsymbol{Q}_h$ and $\boldsymbol{Q}_h^{j}$, for $j=s,d$, will be
denoted by $\boldsymbol{v}_h := (v_h, \bar{v}_h) \in \boldsymbol{V}_h$,
$\boldsymbol{q}_h := (q_h, \bar{q}_h^s, \bar{q}_h^d) \in
\boldsymbol{Q}_h$ and $\boldsymbol{q}_h^{j} := (q_h, \bar{q}_h^j) \in
\boldsymbol{Q}_h^{j}$. Furthermore, we set $\boldsymbol{X}_h :=
\boldsymbol{V}_h \times \boldsymbol{Q}_h$.

\subsection{Method}

The discrete form for the Stokes--Darcy system in
\cref{eq:system,eq:interface} reads: given the forcing term $f^s \in
\sbr[0]{L^2(\Omega^s)}^{\rm dim}$, the source/sink term $f^d \in
L^2(\Omega^d)$, the kinematic viscosity $\mu\in\mathbb{R}^+$ and the
permeability $\kappa\in\mathbb{R}^+$, find $(\boldsymbol{u}_h,
\boldsymbol{p}_h) \in \boldsymbol{X}_h$ such that
\begin{equation}
  \label{eq:hdgwf}
  B_h( (\boldsymbol{u}_h, \boldsymbol{p}_h), (\boldsymbol{v}_h, \boldsymbol{q}_h) )
  = \int_{\Omega^s} f^s\cdot v_h \dif x + \int_{\Omega^d} f^dq_h\dif x
  \quad \forall (\boldsymbol{v}_h, \boldsymbol{q}_h) \in \boldsymbol{X}_h,
\end{equation}
where
\begin{multline}
  B_h( (\boldsymbol{u}_h, \boldsymbol{p}_h), (\boldsymbol{v}_h, \boldsymbol{q}_h) )
  :=
  a_h(\boldsymbol{u}_h, \boldsymbol{v}_h)
  + \sum_{j=s,d} \del{b_h^j(\boldsymbol{p}_h^j, v_h) + b_h^{I,j}(\bar{p}_h^j, \bar{v}_h)}
  \\
  + \sum_{j=s,d} \del{b_h^j(\boldsymbol{q}_h^j, u_h) + b_h^{I,j}(\bar{q}_h^j, \bar{u}_h)}.
\end{multline}
The bilinear form $a_h(\cdot, \cdot)$ is defined as
\begin{equation}
  \label{eq:def_ah}
  a_h(\boldsymbol{u}_h, \boldsymbol{v}_h)
  := a_h^s(\boldsymbol{u}_h, \boldsymbol{v}_h)
  + a_h^d(u_h, v_h) + a_h^I(\bar{u}_h, \bar{v}_h),
\end{equation}
where
\begin{subequations}
  \begin{align}
    \label{eq:ah_s}
    a_h^s(\boldsymbol{u}, \boldsymbol{v})
    &:=
    \sum_{K\in\mathcal{T}^s} \int_K 2\mu \varepsilon(u) : \varepsilon(v) \dif x
    +\sum_{K\in\mathcal{T}^s} \int_{\partial K} \frac{2\beta\mu}{h_K}(u-\bar{u}) \cdot (v-\bar{v}) \dif s
    \\
    \nonumber
    &\ -\sum_{K\in\mathcal{T}^s} \int_{\partial K} 2\mu\varepsilon(u)n^s \cdot (v-\bar{v})\dif s
    -\sum_{K\in\mathcal{T}^s} \int_{\partial K} 2\mu\varepsilon(v)n^s \cdot (u-\bar{u})\dif s,
    \\
    \label{eq:ah_d}
    a_h^d(u, v)
    &:= \int_{\Omega^d} \kappa^{-1} u\cdot v \dif x ,
    \\
    \label{eq:ah_i}
    a_h^I(\bar{u}, \bar{v}) &:= \int_{\Gamma^I}  \alpha\kappa^{-1/2} \bar{u}^t\cdot \bar{v}^t\dif s,
  \end{align}
\end{subequations}
and where $\beta > 0$ is a penalty parameter. The bilinear forms
$b_h^j(\cdot, \cdot)$ and $b_h^{I,j}(\cdot, \cdot)$ are defined as:
\begin{subequations}
  \begin{align}
    \label{eq:bh_j}
    b_h^{j}(\boldsymbol{p}^{j},v )
      &:= -\sum_{K\in\mathcal{T}^j} \int_K p \nabla\cdot v \dif x
              + \sum_{K\in\mathcal{T}^j} \int_{\partial K} \bar{p}^j v\cdot n^j \dif s,
    \\
    \label{eq:bh_Ij}
    b_h^{I,j}(\bar{p}^j, \bar{v} )
      &:= -\int_{\Gamma^I}\bar{p}^j\bar{v}\cdot n^j \dif s.
  \end{align}
\end{subequations}

\subsection{Properties of the numerical scheme}

Setting $\bar{v}_h = 0$ and $\boldsymbol{q}_h = 0$ in \cref{eq:hdgwf}
demonstrates cell-wise momentum balance \cref{eq:momentum} subject to
weak satisfaction of the boundary condition provided by $\bar{u}_h$, and
a cell-wise statement of Darcy's law \cref{eq:d_velocity} subject to
weak satisfaction of the boundary condition provided by $\bar{p}_h^d$
and a Neumann boundary condition on~$\Gamma^I$. Setting $v_h = 0$ and
$\boldsymbol{q}_h = 0$ in \cref{eq:hdgwf} shows that the formulation
imposes normal continuity weakly across facets of the `numerical' Stokes
stress tensor:
\begin{equation}
  \hat{\sigma}_h^s := 2\mu\varepsilon(u_h^s) - \bar{p}_h^s\mathbb{I}
  - \frac{2\beta\mu}{h_K}(u_h^s - \bar{u}_h^s)\otimes n,
\end{equation}
where $\mathbb{I}$ is the identity tensor. Setting $\boldsymbol{v}_h =
0$ and $\bar{q}_h^s = \bar{q}_h^d = 0$ in \cref{eq:hdgwf} and noting
that $\nabla \cdot u_h\in P_{k-1}(K)$, the numerical scheme imposes
pointwise mass conservation, i.e.,
\begin{equation}
  \label{eq:massconservation-1}
  -\nabla \cdot u_h = \chi^d \Pi_Q f^d \quad \forall x\in K, \ \forall K \in \mathcal{T},
\end{equation}
where $\Pi_Q$ is the standard $L^2$-projection operator onto $Q_h$.
Finally, setting $\boldsymbol{v}_h = 0$, $q_h = 0$ and $\bar{q}_h^l=0$
with $l \ne j$ in \cref{eq:hdgwf} and noting that $\jump{u_h\cdot n},
u_h\cdot n, (u_h-\bar{u}_h)\cdot n\in P_k(F)$ on each $F \in
\mathcal{F}$, we find that $u_h$ is $H(\text{div})$-conforming, i.e.,
\begin{subequations}
  \begin{align}
    \label{eq:massconservation-2}
    \jump{u_h\cdot n}&=0 && \forall x\in F,\ \forall F\in \mathcal{F} \backslash \mathcal{F}^I,
    \\
    \label{eq:massconservation-4}
    u_h\cdot n&=\bar{u}_h\cdot n && \forall x\in F,\ \forall F\in \mathcal{F}^I,
  \end{align}
  \label{eq:massconservations}
\end{subequations}
where $\jump{\cdot}$ is the usual jump operator and $n$ the unit normal
vector on~$F$.

\section{Consistency, continuity and well-posedness for Stokes--Darcy}
\label{sec:consiscontwell}

\subsection{Preliminaries}
\label{ss:ccw_prelim}

To prove consistency, continuity and stability we require extended
function spaces and appropriate norms. We introduce
\begin{equation}
  \begin{split}
    V &:= \cbr[1]{v: v^s\in \sbr[0]{H^2(\Omega^s)}^{\rm dim},\
        v^d \in \sbr[0]{H^1(\Omega^d)}^{\rm dim},\
      v=0\ \text{on}\ \Gamma^s,\ v \cdot n=0 \ \text{on}\ \Gamma^d,
      \ v^s\cdot n = v^d\cdot n\ \text{on}\ \Gamma^I},
    \\
    Q &:= \cbr[1]{q \in L_0^2(\Omega)\ :\ q^s \in H^1(\Omega^s),\ q^d \in H^2(\Omega^d)},
  \end{split}
\end{equation}
and $X := V \times Q$. We let $\bar{V}$ be the trace space of $V$
restricted to $\Gamma_0^s$ and $\bar{Q}$ be the trace space of $Q$
restricted to $\Gamma_0$. We introduce the trace operator $\gamma_V : V
\to \bar{V}$ to restrict functions in $V$ to $\Gamma_0^s$, and the trace
operators $\gamma_{Q^j} : Q^j \to \bar{Q}^j$ to restrict functions in
$Q^j$ to $\Gamma_0^j$. We remark that $\gamma_{Q^s}(p^s) \ne
\gamma_{Q^d}(p^d)$ on the interface~$\Gamma^I$. Where no ambiguity
arises we omit the subscript when using the trace operator. For
notational purposes we also introduce $\boldsymbol{V} := V\times
\bar{V}$, $\boldsymbol{Q} := Q\times \bar{Q}$ and
\begin{equation}
  \boldsymbol{V}(h) := \boldsymbol{V}_h + \boldsymbol{V}, \quad
  \boldsymbol{Q}(h) := \boldsymbol{Q}_h + \boldsymbol{Q}, \quad
  \boldsymbol{X}(h) := \boldsymbol{V}(h) \times \boldsymbol{Q}(h).
\end{equation}
For $j = s, d$ we denote by $\boldsymbol{V}^{j}(h)$ and
$\boldsymbol{Q}^{j}(h)$ the restriction of, respectively,
$\boldsymbol{V}(h)$ and $\boldsymbol{Q}(h)$ to~$\Omega^j$.

We use various norms throughout, which are defined now. On
$\boldsymbol{V}^s(h)$ we define
\begin{equation*}
  \tnorm{\boldsymbol{v}}_{v,s}^2 := \sum_{K\in \mathcal{T}^s}\del[1]{\norm[0]{\nabla v}_K^2
  + h_K^{-1}\norm{v-\bar{v}}^2_{\partial K}},
\end{equation*}
where $\norm{\cdot}_D$ denotes the standard $L^2$-norm on domain~$D$,
and
\begin{equation*}
  \tnorm{\boldsymbol{v}}_{v',s}^2 := \tnorm{\boldsymbol{v}}_{v,s}^2
  + \sum_{K\in \mathcal{T}^s} h_K^2 \envert{v}_{H^2(K)}^2.
\end{equation*}
On $\boldsymbol{V}(h)$ we introduce
\begin{equation*}
  \tnorm{\boldsymbol{v}}_{v}^2
  :=
  \tnorm{\boldsymbol{v}}_{v,s}^2
  + \norm{v}_{\Omega^d}^2
  + \norm[0]{ \bar{v}^t }^2_{\Gamma^I},
\end{equation*}
and
\begin{equation*}
  \tnorm{\boldsymbol{v}}_{v'}^2
  := \tnorm{\boldsymbol{v}}_{v}^2
  + \sum_{K\in \mathcal{T}^s} h_K^2 \envert{v}_{H^2(K)}^2
  =\tnorm{\boldsymbol{v}}_{v',s}^2  + \norm{v}_{\Omega^d}^2
  + \norm[0]{ \bar{v}^t }^2_{\Gamma^I}.
\end{equation*}
Note that the norms $\tnorm{\cdot}_v$ and $\tnorm{\cdot}_{v'}$ are
equivalent on $\boldsymbol{V}_h$, see \cite[eq.~(5.5)]{Wells:2011}.

Finally, for $\boldsymbol{q}^j \in \boldsymbol{Q}^{j}(h)$ with $j = s,
d$ and $\boldsymbol{q} \in \boldsymbol{Q}(h)$, we define
\begin{equation*}
  \tnorm{\boldsymbol{q}^j}_{p,j}^2 := \norm{q}_{\Omega^j}^2
  + \sum_{K\in \mathcal{T}^j} h_K \norm[0]{\bar{q}^j}_{\partial K}^2,
  \qquad   \tnorm{\boldsymbol{q}}_{p}^2
  := \sum_{j=s,d}\tnorm{\boldsymbol{q}^j}^2_{p,j}.
\end{equation*}

We will make use of various standard estimates. In particular, use will
be made of the trace inequalities for $K\in\mathcal{T}$,
\cite[Lemma~1.46, Remark~1.47]{Pietro:book}
\begin{equation}
  \label{eq:trace-1}
  \norm{v}_{\partial K} \le C_{T,1} h_K^{-1/2} \norm{v}_K \quad \forall v\in P_k(K),
\end{equation}
and the following straightforward extensions of the continuous trace
inequality \cite[Theorem~1.6.6]{brenner:book}
\begin{equation}
  \label{eq:trace-2}
  \norm{v}_{\partial K}^2 \le C_{T,2} \del[1]{h_K^{-1} \norm{v}_K^2
  + h_K \norm{v}_{1,K}^2} \quad \forall v\in H^1(K),
\end{equation}
and
\begin{equation}
  \label{eq:continuoustrace}
  \norm{v}_{\Gamma^I} \le C_{c,T} \norm[0]{\nabla v}_{\Omega^s} \quad \forall
  v \in \cbr[1]{ v \in H^1(\Omega^s)\ :\ v=0\ \text{on}\ \Gamma^s},
\end{equation}
where $C_{T,1}, C_{T,2}, C_{c,T} > 0 $ are independent of~$h_K$.

\subsection{Consistency}
\label{ss:consistency}

We now prove that the scheme in \cref{eq:hdgwf} is consistent with the
Stokes--Darcy system in \cref{eq:system,eq:interface}.

\begin{lemma}[Consistency]
  \label{lem:consistency}
  If $(u, p) \in X $ solves the Stokes--Darcy system
  \cref{eq:system,eq:interface}, then letting
  $\boldsymbol{u}=(u, \gamma(u))$ and
  $\boldsymbol{p} = (p, \gamma(p^s), \gamma(p^d))$,
  \begin{equation}
    B_h( (\boldsymbol{u}, \boldsymbol{p}), (\boldsymbol{v}, \boldsymbol{q}) )
    =
    \int_{\Omega^s} f^s \cdot v \dif x + \int_{\Omega^d} f^dq\dif x
    \quad
    \forall (\boldsymbol{v}, \boldsymbol{q}) \in \boldsymbol{X}(h).
  \end{equation}
\end{lemma}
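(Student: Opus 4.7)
The strategy is to substitute the exact solution into every bilinear form, integrate by parts cell-wise to expose the strong-form PDEs, and show that all remaining surface terms either cancel by facet additivity or reduce to the interface conditions \cref{eq:interface}. Because the exact solution is smooth enough that $\bar{u} = \gamma(u)$ and $\bar{p}^{j} = \gamma(p^{j})$ agree with the cell trace on every facet, two immediate simplifications occur: $u - \bar{u} = 0$ on $\partial K$ for $K \in \mathcal{T}^s$, which kills the penalty and adjoint-consistency terms of $a_h^s$; and $\bar{p}^{j} - p^{j} = 0$ on every facet in $\bar{\Omega}^j$, which we will use to collapse the pressure boundary terms produced by integration by parts.

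Next, I would integrate by parts cell-by-cell in the viscous bilinear form and in the $-\int_K p\nabla\cdot v\,\mathrm{d}x$ term. The volume contributions produce $\int_{\Omega^s}(-\nabla\cdot 2\mu\varepsilon(u^s) + \nabla p^s)\cdot v\,\mathrm{d}x = \int_{\Omega^s} f^s\cdot v\,\mathrm{d}x$ by \cref{eq:momentum}, and $\int_{\Omega^d}(\kappa^{-1}u + \nabla p^d)\cdot v\,\mathrm{d}x = 0$ by \cref{eq:d_velocity}. The mass-constraint terms $-\sum_K\int_K q\,\nabla\cdot u\,\mathrm{d}x$ reduce to $\int_{\Omega^d} f^d q\,\mathrm{d}x$ via \cref{eq:mass}. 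For the resulting surface terms, the combination $\int_{\partial K}2\mu\varepsilon(u)n\cdot v - \int_{\partial K}2\mu\varepsilon(u)n\cdot(v-\bar{v})$ collapses to $\int_{\partial K}2\mu\varepsilon(u)n\cdot\bar{v}$, which vanishes on interior facets of $\mathcal{T}^s$ because $\bar{v}$ is single-valued and the two outward normals sum to zero. The pressure boundary terms vanish on every facet in $\bar{\Omega}^j$ because $\bar{p}^{j} = p^{j}$, and the velocity boundary terms from $b_h^j(\boldsymbol{q}^j,u)$ vanish on interior facets of $\mathcal{T}^j$ by the same $n^++n^-=0$ argument applied to the continuous $u$. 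On $\Gamma^s$ the terms vanish because $\bar{v}=0$, and on $\Gamma^d$ they vanish because $u\cdot n = 0$.

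The only surface contributions left to reconcile therefore live on $\Gamma^I$, which is where the main obstacle (careful sign and geometry bookkeeping with $n^d = -n^s$) sits. Collecting what survives on the Stokes side of the momentum equation yields $\int_{\Gamma^I}(2\mu\varepsilon(u^s)n^s - p^s n^s)\cdot\bar{v}\,\mathrm{d}s + a_h^I(\bar{u},\bar{v})$. Decomposing $\bar{v} = (\bar{v}\cdot n^s)n^s + \bar{v}^t$, the normal part is turned into $-p^d(\bar{v}\cdot n^s)$ by the normal-stress balance \cref{eq:bc_I_p}, while the tangential part $(2\mu\varepsilon(u^s)n^s)^t\cdot\bar{v}^t$ is turned into $-\alpha\kappa^{-1/2}(u^s)^t\cdot\bar{v}^t$ by the Beavers--Joseph--Saffman law \cref{eq:bc_I_slip}; the latter cancels $a_h^I(\bar{u},\bar{v})$ exactly. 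On the Darcy side, $b_h^{I,d}(\bar{p}^d,\bar{v}) = -\int_{\Gamma^I} p^d\bar{v}\cdot n^d\,\mathrm{d}s = \int_{\Gamma^I} p^d\bar{v}\cdot n^s\,\mathrm{d}s$ (using $n^d=-n^s$), which precisely cancels the surviving $-\int_{\Gamma^I} p^d\bar{v}\cdot n^s\,\mathrm{d}s$ from the Stokes side. An analogous cancellation for the mass part uses $\bar{u} = u^s = u^d$ on $\Gamma^I$ in the normal direction by \cref{eq:bc_I_u}, so that $\int_{\Gamma^I}\bar{q}^s(u^s - \bar{u})\cdot n^s\,\mathrm{d}s = 0$ and $\int_{\Gamma^I}\bar{q}^d(u^d - \bar{u})\cdot n^d\,\mathrm{d}s = 0$. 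Combining everything gives exactly the right-hand side of \cref{eq:hdgwf}, completing the proof.
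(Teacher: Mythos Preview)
Your proposal is correct and follows essentially the same approach as the paper's proof: cell-wise integration by parts, cancellation of interior facet terms via single-valuedness plus opposing normals, and use of the interface conditions \cref{eq:bc_I_p,eq:bc_I_slip,eq:bc_I_u} on $\Gamma^I$. The only notable difference is organizational---the paper treats $a_h^s$, $b_h^j+b_h^{I,j}$, and $a_h^I$ separately and then sums, while you group the surviving $\Gamma^I$ contributions into a ``Stokes side'' full stress $2\mu\varepsilon(u^s)n^s - p^s n^s$ before applying the normal/tangential decomposition. One small slip: the $b_h^s(\boldsymbol{q}^s,u)$ boundary contribution on $\Gamma^s$ vanishes because $u=0$ there (by \cref{eq:bc_s}), not because $\bar{v}=0$; your sentence conflates this with the viscous term, but the conclusion is unaffected.
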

\begin{proof}
  We consider each form in the definition of $B_h$ separately. Since $u
  = \gamma(u)$ on cell boundaries in $\overline{\Omega}^s$, and
  integration by parts,
  \begin{equation}
    \label{eq:ahconsistency_a}
    \begin{split}
      a_h^s(\boldsymbol{u}, \boldsymbol{v} ) &=
      \sum_{K\in\mathcal{T}^s} \int_K 2\mu \varepsilon(u) : \varepsilon(v) \dif x
      -\sum_{K\in\mathcal{T}^s} \int_{\partial K} 2\mu\varepsilon(u)n^s \cdot (v-\bar{v})\dif s
      \\
      &=
      -\sum_{K\in\mathcal{T}^s} \int_K 2\mu (\nabla\cdot\varepsilon(u))\cdot v \dif x
      +\sum_{K\in\mathcal{T}^s} \int_{\partial K} 2\mu\varepsilon(u)n^s \cdot \bar{v} \dif s.
    \end{split}
  \end{equation}
  Using smoothness of $u$, single-valuedness of $\bar{v}$, and
  \cref{eq:bc_I_slip}, we note that
  \begin{equation}
    \begin{split}
    \sum_{K\in\mathcal{T}^s} \int_{\partial K} 2\mu\varepsilon(u)n^s \cdot \bar{v} \dif s
    &= \int_{\Gamma^I} 2\mu\varepsilon(u)n^s \cdot \bar{v} \dif s
    \\
    &= -\int_{\Gamma^I} \alpha\kappa^{-1/2} (u^s)^t\cdot\bar{v}^t \dif s
    + \int_{\Gamma^I} \del{ 2\mu\del{n^s\cdot \varepsilon(u)n^s}n^s } \cdot \bar{v} \dif s.
    \end{split}
  \end{equation}
  Combining with \cref{eq:ahconsistency_a},
  \begin{multline}
    \label{eq:ahconsistency_b}
    a_h^s(\boldsymbol{u}, \boldsymbol{v})
    =
    -\sum_{K\in\mathcal{T}^s} \int_K 2\mu (\nabla\cdot\varepsilon(u))\cdot v \dif x
    -\int_{\Gamma^I} \alpha\kappa^{-1/2} (u^s)^t\cdot\bar{v}^t \dif s
    \\
    + \int_{\Gamma^I} \del{ 2\mu\del{n^s\cdot \varepsilon(u)n^s}n^s } \cdot \bar{v} \dif s.
  \end{multline}
  Applying integration by parts, noting that $\gamma(p^j) = p^j$ on cell
  boundaries in $\overline{\Omega}^j$, with $j=s,d$, and $n^d = -n^s$ on
  $\Gamma^I$, results in
  \begin{multline}
    \label{eq:bhconsistency_b}
    \sum_{j=s,d}\del{ b_h^j(\boldsymbol{p}^{j}, v) + b_h^{I,j}(\gamma(p^j), \bar{v}) }
    =
    \sum_{K\in\mathcal{T}^s}\int_{K} \nabla p \cdot v \dif x
    + \sum_{K\in\mathcal{T}^d}\int_{K} \nabla p \cdot v \dif x
    \\
    + \int_{\Gamma^I}(p^d-p^s)\bar{v}\cdot n^s \dif s.
  \end{multline}
  Next,
  \begin{equation}
    \label{eq:bhconsistency_uq_s}
    \begin{split}
      \sum_{j=s,d} \del{ b_h^j(\boldsymbol{q}^j, u) + b_h^{I,j}(\bar{q}^j , \gamma(u^s)) }
      =& -\sum_{K\in\mathcal{T}^s} \int_{K} q \nabla\cdot u \dif x
      +\sum_{K\in\mathcal{T}^s} \int_{\partial K} \bar{q}^s u \cdot n^s \dif s
      - \int_{\Gamma^I} \bar{q}^s u^s\cdot n^s \dif s
      \\
      &-\sum_{K\in\mathcal{T}^d} \int_{K} q \nabla\cdot u \dif x
      +\sum_{K\in\mathcal{T}^d} \int_{\partial K} \bar{q}^d u \cdot n^d \dif s
      - \int_{\Gamma^I} \bar{q}^d u^d\cdot n^d \dif s
      \\
      =& \int_{\Omega^d} q f^d \dif x,
    \end{split}
  \end{equation}
  where for the second equality we used \cref{eq:mass}-\cref{eq:bc_d},
  and that $\bar{q}^s$, $\bar{q}^d$, $u^s \cdot n^s$ and $u^d \cdot n^d$
  are single-valued on interior facets. Furthermore, note that
  \begin{equation}
    \label{eq:consistencyahID}
    a_h^I(\gamma(u^s), \bar{v}) = \int_{\Gamma^I} \alpha \kappa^{-1/2}\bar{v}^t \cdot (u^s)^t \dif s
    \quad\text{and}\quad
    a_h^d(u, v) = \int_{\Omega^d} \kappa^{-1} u \cdot v \dif x,
  \end{equation}
  hence summing
  \cref{eq:ahconsistency_b,eq:bhconsistency_b,eq:bhconsistency_uq_s,eq:consistencyahID}
  results in
  \begin{multline}
    B_h( (\boldsymbol{u}, \boldsymbol{p}), (\boldsymbol{v}, \boldsymbol{q}) )
    =
    \sum_{K\in\mathcal{T}^s} \int_K \del{-2\mu (\nabla\cdot\varepsilon(u)) + \nabla p }\cdot v \dif x
    + \sum_{K\in\mathcal{T}^d}\int_K \del{\kappa^{-1} u + \nabla p}\cdot v \dif x
    \\
    + \int_{\Gamma^I} \del{ 2\mu\del{n^s\cdot \varepsilon(u)n^s}n^s } \cdot \bar{v} \dif s
    + \int_{\Gamma^I}(p^d-p^s)n^s \cdot \bar{v} \dif s
    + \int_{\Omega^d} q f^d \dif x.
  \end{multline}
  The result follows after using
  \cref{eq:momentum,eq:d_velocity,eq:bc_I_p}. \qed
\end{proof}

\subsection{Coercivity and continuity of $a_h^s(\cdot, \cdot)$ and $a_h(\cdot, \cdot)$}

In this section we show that $a_h(\cdot, \cdot)$ is coercive on
$\boldsymbol{V}_h$ for sufficiently large penalty parameter $\beta$. We
furthermore prove continuity of $a_h^s(\cdot, \cdot)$ and $a_h(\cdot,
\cdot)$.

\begin{lemma}[Coercivity]
  \label{lem:coercivity_ahs_ah}
  There exists a constant $C > 0$, independent of $h$, and a constant
  $\beta_0 > 0$ such that for $\beta > \beta_0$ and for all
  $\boldsymbol{v}_h \in \boldsymbol{V}_h$,
  \begin{equation}
    \label{ineq:coer2}
    a_h(\boldsymbol{v}_h, \boldsymbol{v}_h) \ge C\tnorm{ \boldsymbol{v}_h }_v^2.
  \end{equation}
\end{lemma}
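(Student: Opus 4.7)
The plan is to split $a_h(\boldsymbol{v}_h,\boldsymbol{v}_h)$ into its three constituent pieces and treat them separately. First, by inspection,
\begin{equation*}
  a_h^d(v_h, v_h) = \kappa^{-1}\norm{v_h}_{\Omega^d}^2,
  \qquad
  a_h^I(\bar v_h, \bar v_h) = \alpha\kappa^{-1/2}\norm[0]{\bar v_h^t}_{\Gamma^I}^2,
\end{equation*}
so these directly produce the last two contributions in $\tnorm{\boldsymbol{v}_h}_v^2$. The remaining work is to show
\begin{equation*}
  a_h^s(\boldsymbol{v}_h,\boldsymbol{v}_h) \ge C\,\tnorm{\boldsymbol{v}_h}_{v,s}^2
\end{equation*}
for $\beta$ sufficiently large.

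For the symmetric off-diagonal consistency term in $a_h^s$, I would bound
\begin{equation*}
  \envert{\int_{\partial K} 2\mu\,\varepsilon(v_h)n^s\cdot (v_h-\bar v_h)\dif s}
  \le 2\mu\,\norm{\varepsilon(v_h)}_{\partial K}\,\norm{v_h-\bar v_h}_{\partial K},
\end{equation*}
then apply the discrete trace inequality \cref{eq:trace-1} component-wise to $\varepsilon(v_h)$ to get $\norm{\varepsilon(v_h)}_{\partial K}\le C_{T,1} h_K^{-1/2}\norm{\varepsilon(v_h)}_K$, followed by Young's inequality with a small parameter $\epsilon>0$. Summing over $K\in\mathcal{T}^s$ and accounting for the factor of two coming from the two symmetric cross terms yields
\begin{equation*}
  a_h^s(\boldsymbol{v}_h,\boldsymbol{v}_h) \ge 2\mu(1-2\epsilon)\!\!\sum_{K\in\mathcal{T}^s}\norm{\varepsilon(v_h)}_K^2
  + \mu\Bigl(2\beta - \tfrac{C_{T,1}^2}{\epsilon}\Bigr)\!\!\sum_{K\in\mathcal{T}^s} h_K^{-1}\norm{v_h-\bar v_h}_{\partial K}^2.
\end{equation*}
Fix $\epsilon=1/4$ (say) and then choose $\beta_0$ so that for $\beta>\beta_0$ both coefficients are strictly positive, giving control of $\sum_K\norm{\varepsilon(v_h)}_K^2 + \sum_K h_K^{-1}\norm{v_h-\bar v_h}_{\partial K}^2$.

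To convert the symmetric-gradient control to full-gradient control $\sum_K\norm{\nabla v_h}_K^2$, I would invoke a broken Korn inequality for hybridized spaces, which states that for all $\boldsymbol{v}_h\in\boldsymbol{V}_h$,
\begin{equation*}
  \sum_{K\in\mathcal{T}^s}\norm[0]{\nabla v_h}_K^2
  \le C_K\Bigl(\sum_{K\in\mathcal{T}^s}\norm{\varepsilon(v_h)}_K^2 + \sum_{K\in\mathcal{T}^s} h_K^{-1}\norm{v_h-\bar v_h}_{\partial K}^2\Bigr);
\end{equation*}
here the boundary condition $\bar v_h=0$ on $\Gamma^s$ rules out rigid body modes. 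Combining these bounds gives $a_h^s(\boldsymbol{v}_h,\boldsymbol{v}_h)\gtrsim \tnorm{\boldsymbol{v}_h}_{v,s}^2$, and adding the $a_h^d$ and $a_h^I$ contributions completes the proof.

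The main obstacle is the threshold condition for $\beta$: one must ensure that the same $\beta_0$ works for all $h$ (so $C_{T,1}$ must indeed be $h$-independent, as given by \cref{eq:trace-1}) and that the final constant $C$ depends only on $\mu,\kappa,\alpha,C_{T,1}$ and the Korn constant $C_K$, but not on $h$. A secondary subtlety is citing or justifying the broken Korn inequality in the EDG--HDG setting with continuous facet velocity $\bar v_h$, but this follows from the standard DG Korn inequality applied to the pair $(v_h,\bar v_h)$ since continuity of $\bar v_h$ only strengthens the hypotheses.
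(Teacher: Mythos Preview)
Your proposal is correct and follows essentially the same approach as the paper: the paper's proof likewise handles $a_h^s$ by the standard trace/Young argument (referred to another paper) followed by Korn's inequality to pass from $\varepsilon(v_h)$ to $\nabla v_h$, and then observes that $a_h^d$ and $a_h^I$ directly supply the remaining pieces of $\tnorm{\cdot}_v^2$. Your write-up simply spells out the details that the paper compresses into a citation.
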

\begin{proof}
  Using identical steps as in \cite[Lemma~4.2]{Rhebergen:2017} and
  applying Korn's inequality \cite{Brenner:2004} it can be shown that
  \begin{equation}
    \label{ineq:coer1}
    a_h^s(\boldsymbol{v}_h, \boldsymbol{v}_h)
    \ge C \sum_{K\in \mathcal{T}^s}\del[1]{\|\varepsilon(v_h)\|_K^2+h_K^{-1}\|v_h-\bar{v}_h\|^2_{\partial K}}
    \ge C\tnorm{ \boldsymbol{v}_h }_{v,s}^2,
  \end{equation}
  The result follows by definition of $a_h$. \qed
\end{proof}

\begin{lemma}[Continuity]
  \label{lem:continuity}
  There exists a generic constant $C>0$, independent of $h$, such that
  for all $\boldsymbol{u}, \boldsymbol{v} \in \boldsymbol{V}(h)$,
  \begin{subequations}
    \begin{align}
      \label{ineq:continuity-ahs}
      a_h^s(\boldsymbol{u}, \boldsymbol{v}) & \le C \tnorm{\boldsymbol{u}}_{v',s}\tnorm{\boldsymbol{v}}_{v',s},
      \\ \label{ineq:continuity-ah}
      a_h(\boldsymbol{u}, \boldsymbol{v}) & \le C \tnorm{\boldsymbol{u}}_{v'}\tnorm{\boldsymbol{v}}_{v'}.
    \end{align}
    \label{ineq:continuity-ahall}
  \end{subequations}
\end{lemma}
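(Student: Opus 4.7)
The approach is the standard one for hybridized DG continuity estimates: bound each term in the bilinear forms by Cauchy--Schwarz, convert face norms into volume quantities using the trace inequalities of \cref{ss:ccw_prelim}, and assemble by a discrete Cauchy--Schwarz in the sum over cells. The key design decision of the triple-bar norms is that each contribution to $a_h^s$ and $a_h$ should fall directly into a paired combination already weighted correctly by $h_K$.

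For \cref{ineq:continuity-ahs}, I would split $a_h^s(\boldsymbol u,\boldsymbol v)$ into its four parts. The volume term gives $2\mu\sum_K \|\varepsilon(u)\|_K \|\varepsilon(v)\|_K \le 2\mu \sum_K \|\nabla u\|_K \|\nabla v\|_K$, and the penalty term pairs $h_K^{-1/2}\|u-\bar u\|_{\partial K}$ with $h_K^{-1/2}\|v-\bar v\|_{\partial K}$, both of which are explicit in $\tnorm{\cdot}_{v,s}$. The delicate pieces are the two adjoint-consistency terms $-\sum_K\int_{\partial K} 2\mu\varepsilon(u)n^s\cdot(v-\bar v)$. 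By Cauchy--Schwarz on $\partial K$ these are bounded by $2\mu\bigl(h_K^{1/2}\|\varepsilon(u)\|_{\partial K}\bigr)\bigl(h_K^{-1/2}\|v-\bar v\|_{\partial K}\bigr)$, and the factor $h_K^{1/2}\|\varepsilon(u)\|_{\partial K}$ is controlled using \cref{eq:trace-2} applied componentwise to $\varepsilon(u)$, which is licit because $u\in H^2(K)$ on $\Omega^s$ for every $\boldsymbol u\in\boldsymbol V(h)$. This yields
\begin{equation*}
h_K\|\varepsilon(u)\|_{\partial K}^2 \le C_{T,2}\bigl(\|\nabla u\|_K^2 + h_K^2 |u|_{H^2(K)}^2\bigr),
\end{equation*}
which is precisely the combination that $\tnorm{\boldsymbol u}_{v',s}^2$ controls locally; the symmetric bound supplies the $\varepsilon(v)$ factor from the fourth term. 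A discrete Cauchy--Schwarz over $K\in\mathcal{T}^s$ then finishes \cref{ineq:continuity-ahs}.

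For \cref{ineq:continuity-ah}, I would combine the previous bound with elementary estimates for $a_h^d$ and $a_h^I$: Cauchy--Schwarz on $\Omega^d$ gives $|a_h^d(u,v)| \le \kappa^{-1}\|u\|_{\Omega^d}\|v\|_{\Omega^d}$, and Cauchy--Schwarz on $\Gamma^I$ gives $|a_h^I(\bar u,\bar v)| \le \alpha\kappa^{-1/2}\|\bar u^t\|_{\Gamma^I}\|\bar v^t\|_{\Gamma^I}$. Each factor appears directly in $\tnorm{\cdot}_{v'}^2$, so a final Cauchy--Schwarz across the three summands $a_h^s + a_h^d + a_h^I$ delivers the stated bound.

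The main point that deserves care is the use of the enriched norm $\tnorm{\cdot}_{v',s}$ rather than $\tnorm{\cdot}_{v,s}$: on $\boldsymbol V_h$ alone, the discrete trace inequality \cref{eq:trace-1} would suffice and continuity would hold in the $v$ norm, but on the extended space $\boldsymbol V(h)$ the only available estimate for $\|\varepsilon(u)\|_{\partial K}$ is the continuous trace inequality \cref{eq:trace-2}, which forces the appearance of the $h_K^2|u|_{H^2(K)}^2$ term. The equivalence of $\tnorm{\cdot}_v$ and $\tnorm{\cdot}_{v'}$ on $\boldsymbol V_h$ noted after the norm definitions ensures that this distinction does not weaken the result when it is eventually applied to discrete functions.
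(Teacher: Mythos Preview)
Your proof is correct and follows essentially the same approach as the paper: Cauchy--Schwarz on each term, the continuous trace inequality \cref{eq:trace-2} to control $h_K^{1/2}\|\varepsilon(u)\|_{\partial K}$ (together with $|\varepsilon(u)|_{H^\ell(K)}\le C|u|_{H^{\ell+1}(K)}$), and a discrete Cauchy--Schwarz over cells, then a further Cauchy--Schwarz across $a_h^s$, $a_h^d$, $a_h^I$ for \cref{ineq:continuity-ah}. Your concluding remark on why the primed norm is needed on $\boldsymbol V(h)$ is exactly the point.
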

\begin{proof}
  \Cref{ineq:continuity-ahs} follows by definition of $a_h^s$
  \cref{eq:ah_s}, the Cauchy-Schwarz inequality, the trace inequality
  \cref{eq:trace-2}, H\"{o}lder's inequality for sums and since
  $\envert[0]{\varepsilon(u)}_{H^{\ell}(K)} \leq
  C\envert{u}_{H^{\ell+1}(K)}$, $\ell = 0, 1$. \Cref{ineq:continuity-ah}
  follows by definition of $a_h$ \cref{eq:def_ah}, using the
  Cauchy--Schwarz inequality, \cref{ineq:continuity-ahs} and
  H\"{o}lder's inequality for sums. \qed
\end{proof}

\subsection{The inf-sup condition}

To present the inf-sup condition it will be convenient to split the
velocity-pressure coupling term $b_h^{j}(\boldsymbol{q}_h^j,v_h)$ in
\cref{eq:bh_j} as follows
\begin{equation}
  b_h^j(\boldsymbol{q}_h^j, v_h) = b_h^{j,1}(q_h, v_h ) +b_h^{j,2}(\bar{q}_h^j, v_h),
\end{equation}
where
\begin{equation*}
  b_h^{j,1}(q_h,v_h) =-\sum_{K\in \mathcal{T}^j}\int_Kq_h\nabla \cdot v_h\dif x
  \quad\text{and}\quad
  b_h^{j,2}(\bar{q}_h^j,v_h) =\sum_{K\in \mathcal{T}^j}\int_{\partial K}\bar{q}_h^jv_h\cdot n^j\dif s,
\end{equation*}
for $j=s, d$.

The main result of this section is the following theorem.
\begin{theorem}[inf-sup condition]
  \label{thm:infsup}
  There exists a constant $c_{{\rm inf}}^{\star} > 0$, independent of
  $h$, such that for any $\boldsymbol{q}_h \in \boldsymbol{Q}_h$,
  \begin{equation}
    c_{{\rm inf}}^{\star} \tnorm{\boldsymbol{q}_h}_{p} \le
    \sup_{\substack{\boldsymbol{v}_h \in \boldsymbol{V}_h \\ \boldsymbol{v}_h \ne 0}}
    \frac{\sum_{j=s,d}\del[1]{b_h^{j}(\boldsymbol{q}_h^j, v_h) +
        b_h^{I,j}(\bar{q}_h^j,\bar{v}_h)}}{\tnorm{\boldsymbol{v}_h}_{v}}.
  \end{equation}
\end{theorem}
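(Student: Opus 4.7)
The plan is a standard Fortin argument adapted to the two-subdomain EDG--HDG setting: construct, linearly in $\boldsymbol{q}_h$, a single test function $\boldsymbol{v}_h^\star \in \boldsymbol{V}_h$ such that the numerator is bounded below by $C \tnorm{\boldsymbol{q}_h}_p^2$ and $\tnorm{\boldsymbol{v}_h^\star}_v \le C \tnorm{\boldsymbol{q}_h}_p$. I will build $\boldsymbol{v}_h^\star$ as the sum of two pieces, one controlling the cell pressure $\|q_h\|_\Omega$, the other controlling the facet-pressure contributions $\sum_{K\in\mathcal{T}^j} h_K\|\bar{q}_h^j\|_{\partial K}^2$ for $j=s,d$.

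For the cell-pressure piece, since $q_h \in L_0^2(\Omega)$ the classical surjectivity of the continuous divergence yields $w \in [H_0^1(\Omega)]^{\rm dim}$ with $-\nabla\cdot w = q_h$ and $\|w\|_1 \le C \|q_h\|_\Omega$. I then apply a Fortin-type interpolant $\Pi_h : [H_0^1(\Omega)]^{\rm dim} \to \boldsymbol{V}_h$ (of the kind used in related EDG--HDG Stokes analyses, see e.g.\ \cite{Rhebergen:2019}) satisfying the moment conditions
\begin{equation*}
  \int_K q\, \nabla\cdot(\Pi_h w - w)\dif x = 0\ \forall q \in P_{k-1}(K),\qquad
  \int_F (\Pi_h w - w)\cdot n\, \bar r \dif s = 0\ \forall \bar r \in P_k(F),
\end{equation*}
together with the stability $\tnorm{\Pi_h w}_v \le C\|w\|_1 \le C\|q_h\|_\Omega$. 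Testing the numerator against $\Pi_h w$ and using these moment properties, the inter-cell facet sums telescope (since $w\cdot n$ is single-valued across every facet, including $\Gamma^I$), leaving a contribution equal to $\|q_h\|_\Omega^2$.

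For the facet-pressure piece, on each $K \in \mathcal{T}^j$ I would locally construct $v_h^\flat|_K \in [P_k(K)]^{\rm dim}$ whose normal trace equals $h_K\bar{q}_h^j$ in the $P_k$-moment sense on every $F \subset \partial K$, and set $\bar{v}_h^\flat \equiv 0$. Polynomial scaling then gives $\tnorm{\boldsymbol{v}_h^\flat}_v^2 \le C \sum_{K,j} h_K\|\bar{q}_h^j\|_{\partial K}^2$, while direct evaluation yields
\begin{equation*}
  \sum_j \del{b_h^j(\boldsymbol{q}_h^j, v_h^\flat) + b_h^{I,j}(\bar{q}_h^j,0)} \ge C \sum_{K,j} h_K\|\bar{q}_h^j\|_{\partial K}^2 - C \|q_h\|_\Omega \tnorm{\boldsymbol{v}_h^\flat}_v,
\end{equation*}
the subtracted term arising from the volumetric $q_h\nabla\cdot v_h^\flat$ integrals. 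Setting $\boldsymbol{v}_h^\star = \Pi_h w + \delta\boldsymbol{v}_h^\flat$ with a sufficiently small $\delta > 0$ and absorbing the cross terms via Young's inequality produces numerator $\ge C \tnorm{\boldsymbol{q}_h}_p^2$ and, by the triangle inequality, $\tnorm{\boldsymbol{v}_h^\star}_v \le C \tnorm{\boldsymbol{q}_h}_p$, which gives the claim after dividing.

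The main obstacle is building the Fortin operator $\Pi_h$ so that all moment conditions hold simultaneously at the interface $\Gamma^I$: because $\bar{q}_h^s$ and $\bar{q}_h^d$ are independent there, the normal-moment property must hold on facets in $\Gamma^I$ when tested against \emph{both} $\bar{q}_h^s$ from the Stokes side and $\bar{q}_h^d$ from the Darcy side, while the trace $\gamma(\Pi_h w)$ on $\Gamma_0^s$ must remain in the continuous space $\bar V_h$. Once this interface-compatible interpolant is in hand, the remaining estimates are routine polynomial scaling arguments and parameter selection, with $c_{\rm inf}^\star$ emerging independent of $h$.
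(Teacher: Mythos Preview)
Your two-piece Fortin strategy matches the paper's: one piece built from a BDM interpolant of a right inverse of the divergence to control $\|q_h\|_\Omega$ (the paper's \cref{lem:infsup_bh1}), a second piece built from a BDM local lifting with $\bar v_h=0$ to control the facet-pressure contributions (the paper's \cref{lem:infsup_bh2}; your $h_K$-scaled lifting is a harmless variant), and a small-parameter combination (deferred in the paper to \cite[Lemma~1]{Rhebergen:2018b}).

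Where you diverge is in insisting that the cell-pressure test function produce \emph{exact} cancellation of all facet and interface terms, which leads you to pose an ``interface-compatible Fortin operator'' as the main obstacle. The paper avoids this entirely by first splitting $b_h^{j}=b_h^{j,1}+b_h^{j,2}$ and proving \cref{lem:infsup_bh1} only for the volumetric part $\sum_j b_h^{j,1}$: the test function there is simply $(\Pi_V w,\bar\Pi_V w)$, with $\Pi_V$ the BDM interpolant and $\bar\Pi_V$ a Scott--Zhang interpolant, and only the cell-moment property \cref{lem:BDM}~\eqref{lem:BDM-i} is used; the trace component $\bar\Pi_V w$ enters solely for $\tnorm{\cdot}_v$-stability, not for any moment condition. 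The interface residual you are worried about, namely $\int_{\Gamma^I}(\bar q_h^s-\bar q_h^d)(w-\bar\Pi_V w)\cdot n$, is then just a cross term, bounded by $C\tnorm{\boldsymbol q_h}_p\|q_h\|_\Omega$ via \cref{eq:l2projectionVbar_a} and $\|w\|_{H^1}\le C\|q_h\|_\Omega$, and it is absorbed in the $\delta$-combination exactly like the cross term you already allow in your facet piece. No interface-compatible interpolant is needed; once you drop that requirement your argument and the paper's coincide.
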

To prove this result we require the definition of two interpolation
operators. For the velocity we require the following BDM interpolation
operator~\cite[Lemma~7]{Hansbo:2002}.
\begin{lemma}[BDM interpolation operator]
  \label{lem:BDM}
  If the mesh consists of triangles (${\rm dim}=2$) or tetrahedra (${\rm
  dim}=3$) there is an interpolation operator $\Pi_{V} :
  \sbr[0]{H^1(\Omega^j)}^{\rm dim} \rightarrow V_h^j$, where $V_h^j$ is
  the space of functions in $V_h$ restricted to cells in
  $\mathcal{T}^j$, such that for all $u \in \sbr[0]{H^{k+1}(K)}^{\rm
  dim}$,
  \begin{enumerate}[label=\roman*.]
  \item $\displaystyle \int_K q(\nabla \cdot (u-\Pi_V u)) \dif x =0$ for
    all $q\in P_{k-1}(K)$. \label{lem:BDM-i}

  \item $\displaystyle \int_F \bar{q} (u-\Pi_Vu) \cdot n \dif s =0$ for
    all $\bar{q}\in P_{k}(F)$, where $F$ is an edge (${\rm dim}=2$) or
    face (${\rm dim}=3$) of $\partial K$. \label{lem:BDM-ii}

    \item $\jump{n\cdot \Pi_V u}=0$, where $\jump{\cdot}$ is the usual
    jump operator. \label{lem:BDM-iii}

    \item $\norm{u-\Pi_V u}_{m,K} \le C h_K^{\ell-m}
    \norm{u}_{H^{\ell}(K)}$ with $m = 0, 1, 2$ and $m \le \ell \le k+1$.
    \label{lem:BDM-iv}
  \end{enumerate}
\end{lemma}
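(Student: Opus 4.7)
The plan is to define $\Pi_V$ cell-by-cell on each simplex $K \in \mathcal{T}^j$ through the canonical unisolvent set of BDM degrees of freedom, and then to read off the four stated properties either directly from the construction or via a standard scaling argument. On each $K$ I prescribe (a) the \emph{facet moments} $\int_F (\Pi_V u)\cdot n\,\bar q\,\dif s = \int_F u\cdot n\,\bar q\,\dif s$ for all $\bar q \in P_k(F)$ and every facet $F \subset \partial K$, and (b) a complementary set of \emph{interior moments} $\int_K (\Pi_V u)\cdot w\,\dif x = \int_K u\cdot w\,\dif x$ for $w$ ranging over a space $N_K \subset \sbr[0]{P_{k-1}(K)}^{\rm dim}$ chosen so that $\nabla(P_{k-1}(K)/\mathbb{R}) \subseteq N_K$ and the total number of functionals equals $\dim \sbr[0]{P_k(K)}^{\rm dim}$. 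Existence and uniqueness of $\Pi_V u \in \sbr[0]{P_k(K)}^{\rm dim}$ then reduce to the classical BDM unisolvence theorem.

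Properties (ii) and (iii) are essentially built in. Property (ii) is a restatement of the facet degrees of freedom, and property (iii) follows because (ii) identifies $(\Pi_V u)\cdot n|_F$ as the $L^2(F)$-projection of $u\cdot n$ onto $P_k(F)$, which is single-valued on interior facets since $u \in \sbr[0]{H^1(\Omega^j)}^{\rm dim}$ has a single-valued normal trace there. For property (i), I integrate by parts on $K$: for $q \in P_{k-1}(K)$,
\begin{equation*}
\int_K q\,\nabla\cdot(u - \Pi_V u)\,\dif x = -\int_K \nabla q\cdot(u-\Pi_V u)\,\dif x + \int_{\partial K} q\,(u - \Pi_V u)\cdot n\,\dif s.
\end{equation*}
The boundary integral vanishes by (ii) since $q|_F \in P_{k-1}(F)\subset P_k(F)$, and the volume integral vanishes by the interior degrees of freedom because $\nabla q \in \nabla(P_{k-1}(K)/\mathbb{R}) \subseteq N_K$.

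For the approximation estimate (iv) I use a standard Bramble--Hilbert argument combined with affine equivalence on a reference simplex $\hat K$. Since $\Pi_V$ fixes every polynomial in $\sbr[0]{P_k(K)}^{\rm dim}$ (such a polynomial matches all of the unisolvent DOFs) and is continuous on $\sbr[0]{H^1(\hat K)}^{\rm dim}$ via the trace inequality \cref{eq:trace-2} applied to the facet functionals and Cauchy--Schwarz applied to the interior ones, the operator $I - \Pi_V$ annihilates $\sbr[0]{P_k(\hat K)}^{\rm dim}$; pulling back, applying Bramble--Hilbert on the reference element, and rescaling yields the stated bound for $m = 0, 1, 2$ and $m \le \ell \le k+1$.

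The principal technical obstacle is establishing unisolvence of the chosen DOFs on $\sbr[0]{P_k(K)}^{\rm dim}$. The classical BDM argument runs as follows: a polynomial in $\sbr[0]{P_k(K)}^{\rm dim}$ whose facet moments of order up to $k$ vanish must have vanishing normal trace on $\partial K$, and the interior moments against $N_K$, having been chosen to contain $\nabla(P_{k-1}(K)/\mathbb{R})$ and to complete a basis of dual functionals, then force the remaining divergence-free component to vanish as well.
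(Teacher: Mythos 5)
The paper offers no proof of this lemma at all: it is quoted verbatim from the literature (the citation to Hansbo and Larson, Lemma~7, in the sentence introducing the lemma is the entirety of the paper's justification). Your proposal therefore supplies a proof where the paper supplies none, and what you have written is the classical Brezzi--Douglas--Marini construction: facet moments against $P_k(F)$ plus a complementary set of interior moments, with (ii) built in by definition, (iii) following because the facet moments identify $(\Pi_V u)\cdot n|_F$ with the $L^2(F)$-projection of the single-valued normal trace of $u$, (i) by integration by parts against the two families of degrees of freedom, and (iv) by polynomial invariance, boundedness on the reference element, Bramble--Hilbert and scaling. This is the right argument and it is, in substance, the proof behind the cited result.

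Two caveats are worth recording. First, as you acknowledge, matching the dimension count and requiring $\nabla\del{P_{k-1}(K)/\mathbb{R}} \subseteq N_K$ does not by itself yield unisolvence; the complementary interior space must be the specific one of the BDM element (in two dimensions, curls of bubble-weighted polynomials), and deferring that to the classical unisolvence theorem is legitimate but means your construction is not self-contained on exactly the point you identify as the principal obstacle. Second, the Bramble--Hilbert argument for (iv) requires $\Pi_V$ to be bounded on $H^{\max(1,\ell)}(\hat K)$, so the stated range $m \le \ell \le k+1$ should implicitly exclude $\ell = 0$ (the facet functionals are not $L^2$-continuous); this is an artifact of the lemma as quoted rather than of your argument, and the paper only ever invokes (iv) with $\ell \ge 1$. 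Neither point invalidates the proposal.
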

We also require an interpolation operator $\bar{\Pi}_V :
\sbr[0]{H^1(\Omega)}^{\rm dim} \to V_h\cap
\sbr[0]{C^0(\bar{\Omega})}^{\rm dim}$, for example the Scott--Zhang
interpolant \cite[Theorem~4.8.12]{brenner:book}, with the property that
for $v \in \sbr[0]{H^{\ell}(K)}^{\rm dim}$, $1 \leq \ell \leq k+1$,
\begin{subequations}
  \begin{align}
    \label{eq:l2projectionVbar_a}
    \norm[0]{v-\bar{\Pi}_Vv}_{\partial K} &\le C h_K^{\ell-1/2} \norm[0]{v}_{H^{\ell}(K)},
    \\
    \label{eq:l2projectionVbar_b}
    \norm[0]{\Pi_Vv-\bar{\Pi}_Vv}_{\partial K} &\le Ch_K^{\ell-1/2} \norm[0]{v}_{H^{\ell}(K)},
  \end{align}
  \label{eq:l2projectionVbar}
\end{subequations}
where $C$ is a generic constant independent of $h$.

Additionally, we require the following two auxiliary results.
\begin{lemma}
  \label{lem:infsup_bh1}
  There exists a constant $c_{{\rm inf}} > 0$, independent of $h$, such
  that for any $q_h\in Q_h$,
  \begin{equation}
    c_{{\rm inf}} \norm[0]{q_h}_{\Omega} \le
    \sup_{\substack{\boldsymbol{v}_h \in \boldsymbol{V}_h \\
    \boldsymbol{v}_h \ne 0}} \frac{\sum_{j=s,d}b_h^{j,1}(q_h,v_h)}{\tnorm{\boldsymbol{v}_h}_{v}}.
  \end{equation}
\end{lemma}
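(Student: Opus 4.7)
The plan is to combine the continuous inf-sup condition for the Stokes problem on $\Omega$ with a Fortin-type construction built from the BDM interpolant $\Pi_V$ and the Scott--Zhang interpolant $\bar{\Pi}_V$. Since $q_h \in Q_h \subset L^2_0(\Omega)$ and $\Omega$ is a Lipschitz polygonal domain, the continuous inf-sup condition (equivalently, surjectivity of the divergence onto $L^2_0$) produces a $v \in \sbr[0]{H^1_0(\Omega)}^{\rm dim}$ with $-\nabla\cdot v = q_h$ and $\norm{v}_{H^1(\Omega)} \le C\norm{q_h}_\Omega$.

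Given this $v$, I take the candidate $\boldsymbol{v}_h = (v_h,\bar{v}_h) \in \boldsymbol{V}_h$ defined by $v_h|_{\Omega^j} := \Pi_V(v|_{\Omega^j})$ for $j=s,d$ and $\bar{v}_h$ equal to the trace on $\Gamma_0^s$ of $\bar{\Pi}_V v$. The Scott--Zhang operator preserves zero traces on $\Gamma^s$ and yields a $C^0$ function, so indeed $\bar{v}_h \in \bar{V}_h$. A cell-wise application of property \ref{lem:BDM-i} of \cref{lem:BDM}, which uses $q_h|_K \in P_{k-1}(K)$, then gives
\begin{equation*}
  \sum_{j=s,d} b_h^{j,1}(q_h, v_h)
  = -\int_\Omega q_h\, \nabla\cdot \Pi_V v \dif x
  = -\int_\Omega q_h\, \nabla\cdot v \dif x
  = \norm{q_h}_\Omega^2.
\end{equation*}

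It remains to prove the stability bound $\tnorm{\boldsymbol{v}_h}_v \le C\norm{q_h}_\Omega$. The contributions $\sum_{K\in\mathcal{T}^s}\norm{\nabla v_h}_K^2$ and $\norm{v_h}_{\Omega^d}^2$ follow from the BDM stability (item \ref{lem:BDM-iv} of \cref{lem:BDM} with $\ell=m=1$ and with $\ell=1,m=0$) combined with the $H^1$ bound on $v$ and, when needed, Poincar\'e's inequality on $H_0^1(\Omega)$. Each jump term $h_K^{-1/2}\norm{v_h-\bar{v}_h}_{\partial K}$ is controlled by writing $v_h - \bar{v}_h = \Pi_V v - \bar{\Pi}_V v$ and invoking \cref{eq:l2projectionVbar_b}. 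For the tangential interface term $\norm[0]{\bar{v}_h^t}_{\Gamma^I}$, I first bound it by $\norm[0]{\bar{v}_h}_{\Gamma^I}$, split $\bar{v}_h = (\bar{\Pi}_V v - v) + v$ on $\Gamma^I$, control the interpolation error by \cref{eq:l2projectionVbar_a} summed over cells abutting $\Gamma^I$, and bound $\norm{v}_{\Gamma^I}$ by the continuous trace inequality \cref{eq:continuoustrace}, which applies because $v \in H^1(\Omega^s)$ with $v = 0$ on $\Gamma^s$.

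The step I expect to require the most care is the interface contribution: because $\Gamma^I$ is an interior boundary, the Scott--Zhang trace there has no natural smallness, and the $h$-independent bound relies on the continuous trace inequality \cref{eq:continuoustrace} rather than on interpolation error alone. The remaining estimates are routine consequences of the interpolation bounds in \cref{lem:BDM} and \cref{eq:l2projectionVbar} together with the trace inequalities \cref{eq:trace-1,eq:trace-2}.
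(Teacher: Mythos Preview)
Your proposal is correct and follows essentially the same route as the paper: construct $v\in[H_0^1(\Omega)]^{\rm dim}$ from the continuous inf-sup condition, set $\boldsymbol{v}_h=(\Pi_V v,\bar{\Pi}_V v)$, use \cref{lem:BDM}\ref{lem:BDM-i} to recover $\norm{q_h}_\Omega^2$ in the numerator, and bound $\tnorm{\boldsymbol{v}_h}_v$ term by term via \cref{lem:BDM}\ref{lem:BDM-iv}, \cref{eq:l2projectionVbar}, and the continuous trace inequality \cref{eq:continuoustrace} for the interface contribution. Your explicit remark that $\bar{\Pi}_V$ preserves the zero trace on $\Gamma^s$ (so that $\bar{v}_h\in\bar{V}_h$) is a point the paper leaves implicit.
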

\begin{proof}
  Let $q_h \in Q_h$. It is well known,
  e.g.,~\cite[Theorem~6.5]{Pietro:book}, that since $q_h \in
  L^{2}_{0}(\Omega)$ there exists a $v \in [H_{0}^{1}(\Omega)]^{\rm
  dim}$ such that
  \begin{equation}
    \label{eq:infsup-1}
    -\nabla \cdot v=q_h
    \quad\text{and}\quad
    c_{{\rm inf}}\norm{v}_{H^1(\Omega)} \le \norm[0]{q_h}_{\Omega}.
  \end{equation}
  Then, by \cref{lem:BDM}~\eqref{lem:BDM-i}, since
  $q_h \in P_{k-1}(K)$ for all $K \in \mathcal{T}$,
  \begin{equation}
    \label{eq:infsup-2}
    \norm[0]{q_h}_{\Omega}^2 = -\int_{\Omega} q_h\nabla \cdot v \dif x
    = -\int_{\Omega} q_h \nabla \cdot \Pi_Vv \dif x.
  \end{equation}
  By \cref{lem:BDM}~\eqref{lem:BDM-iv} and \cref{eq:l2projectionVbar_b},
  \begin{equation}
    \label{eq:infsup-3}
      \tnorm{(\Pi_Vv, \bar{\Pi}_Vv)}_{v,s}^2
      =\sum_{K\in \mathcal{T}^s}\del[1]{\norm[0]{\nabla(\Pi_Vv)}_K^2
        + h_K^{-1}\norm[0]{\Pi_Vv-\bar{\Pi}_Vv}_{\partial K}^2}
      \le C\sum_{K\in \mathcal{T}^s} \norm[0]{ v }_{H^1(K)}^2 = C\norm[0]{v}^2_{H^1(\Omega^s)}.
  \end{equation}
  On the other hand, since $v \in \sbr[0]{H^1(\Omega^d)}^{\rm dim}$ such
  that $v = 0$ on $\Gamma^d$, using \cref{lem:BDM}~\eqref{lem:BDM-iv},
  \begin{equation}
    \label{eq:infsup-4}
    \norm{\Pi_Vv}_{\Omega^d}
    \le \norm{\Pi_Vv-v}_{\Omega^d} + \norm{v}_{\Omega^d}
    \le C \norm[0]{v}_{H^1(\Omega^d)}.
  \end{equation}
  Next, by \cref{eq:l2projectionVbar_a,eq:continuoustrace},
  \begin{multline}
    \label{eq:infsup-5}
    \norm[0]{(\bar{\Pi}_Vv)^t}_{\Gamma^I}^2
    \le \norm[0]{\bar{\Pi}_Vv}_{\Gamma^I}^2
    \le 2\del[1]{\norm[0]{\bar{\Pi}_Vv-v}_{\Gamma^I}^2+\norm{v}_{\Gamma^I}^2}
    \\
    \le 2\del[2]{\sum_{K\in \mathcal{T}^s} \norm[0]{\bar{\Pi}_Vv-v}^2_{\partial K}} + 2\norm{v}_{\Gamma^I}^2
    \le \del[2]{\sum_{K\in \mathcal{T}^s}Ch_K\norm[0]{v}^2_{H^1(K)}}+\norm{v}_{\Gamma^I}^2
    \le  C\norm{v}_{H^1(\Omega^s)}^2.
  \end{multline}
  Combining \cref{eq:infsup-3,eq:infsup-4,eq:infsup-5}, we obtain
  \begin{equation}
    \label{eq:infsup-6}
    \tnorm{(\Pi_Vv,\bar{\Pi}_Vv)}_v \le C\norm[0]{v}_{H^1(\Omega)}.
  \end{equation}
  Therefore, by \cref{eq:infsup-2,eq:infsup-6},
  \begin{equation}
    \sup_{\substack{\boldsymbol{v}_h\in \boldsymbol{V}_h \\ \boldsymbol{v}_h \ne 0}}\frac{-\int_{\Omega}q_h\nabla \cdot v_h\dif x}{\tnorm{\boldsymbol{v}_h}_{v}}
    \ge
    \frac{-\int_{\Omega}q_h\nabla \cdot \Pi_Vv\dif x}{\tnorm{(\Pi_Vv,\bar{\Pi}_V v)}_{v}}
    \ge
    \frac{\norm[0]{q_h}^2_{\Omega}}{C\norm[0]{v}_{H^1(\Omega)}}
    \ge
    \frac{c_{{\rm inf}}}{C}\norm[0]{q_h}_{\Omega},
  \end{equation}
  where we used \cref{eq:infsup-1} for the last inequality. \qed
\end{proof}

To prove the next auxiliary result, we introduce an operator to lift
$\bar{q}_h^s\in \bar{Q}_h^s$ and $\bar{q}_h^d \in \bar{Q}_h^d$ to
$\Omega^s$ and $\Omega^d$, respectively. Let $R_k(\partial K) :=
\cbr[0]{ \bar{q} \in L^2(\partial K),\ \bar{q}\in P_k(F),\ \forall F \in
\partial K}$ and let $L : R_k(\partial K) \rightarrow
\sbr[0]{P_k(K)}^{\rm dim}$ be the BDM local lifting operator which
satisfies for all $\bar{q}_h\in R_{k}(\partial K)$ the following:
\begin{equation}
  \label{eq:locallifting}
  (L\bar{q}_h)\cdot n = \bar{q}_h
  \quad\text{and}\quad
  \norm[0]{L\bar{q}_h}_K \le Ch_K^{1/2}\norm[0]{\bar{q}_h}_{\partial K}.
\end{equation}
Furthermore, it holds that
\begin{equation}
  \label{eq:lifting}
  \norm[0]{\nabla (L\bar{q}_h)}_K^2 \le Ch_K^{-1}\norm[0]{\bar{q}_h}^2_{\partial K}
  \quad\text{and}\quad
  \norm[0]{L\bar{q}_h}_{\partial K}^2 \le C\norm[0]{\bar{q}_h}^2_{\partial K}.
\end{equation}
See \cite[Example~2.5.1]{Boffi:book}.

\begin{lemma}
  \label{lem:infsup_bh2}
  There exists a constant $\bar{c}_{{\rm inf}} > 0$, independent of
  $h$, such that for any
  $(\bar{q}_h^s,\bar{q}_h^d)\in \bar{Q}_h^s\times \bar{Q}_h^d$,
  \begin{equation}
    \del[2]{\bar{c}_{{\rm inf}}\sum_{j=s,d}\sum_{K\in \mathcal{T}^j} h_K \norm[0]{\bar{q}_h^j}^2_{\partial K}}^{1/2}
    \le
    \sup_{\substack{\boldsymbol{v}_h\in \boldsymbol{V}_h \\ \boldsymbol{v}_h \ne 0}} \frac{\sum_{j=s,d} \del[1]{b_h^{j,2}(\bar{q}_h^j,v_h)
        + b_h^{I,j}(\bar{q}_h^j,\bar{v}_h)}}{\tnorm{\boldsymbol{v}_h}_{v}}.
  \end{equation}
\end{lemma}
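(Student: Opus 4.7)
The plan is to construct, for any given $(\bar{q}_h^s,\bar{q}_h^d) \in \bar{Q}_h^s\times\bar{Q}_h^d$, an explicit test pair $\boldsymbol{v}_h = (v_h,\bar{v}_h) \in \boldsymbol{V}_h$ that makes the numerator quantitatively large with respect to the target norm while keeping $\tnorm{\boldsymbol{v}_h}_v$ under control. The natural candidate comes from the BDM local lifting operator $L$ recalled in \cref{eq:locallifting}--\cref{eq:lifting}: on each cell $K \in \mathcal{T}^j$ set
\begin{equation*}
  v_h|_K := h_K\,L\bar{q}_h^j, \qquad \bar{v}_h := 0.
\end{equation*}
The scaling by $h_K$ is chosen so that the denominator is measured in the correct weighted norm $\tnorm{\cdot}_p$. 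Since $L\bar{q}_h^j \in [P_k(K)]^{\rm dim}$ cellwise, $v_h \in V_h$; and $\bar{v}_h = 0$ trivially lies in $\bar{V}_h$.

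First I would compute the numerator. By the defining property $(L\bar{q}_h^j)\cdot n = \bar{q}_h^j$ on $\partial K$, and because $\bar{v}_h = 0$ kills $b_h^{I,j}(\bar{q}_h^j,\bar{v}_h)$, one gets
\begin{equation*}
  \sum_{j=s,d}\del[1]{b_h^{j,2}(\bar{q}_h^j,v_h) + b_h^{I,j}(\bar{q}_h^j,\bar{v}_h)}
  = \sum_{j=s,d}\sum_{K\in\mathcal{T}^j} h_K\,\norm[0]{\bar{q}_h^j}^2_{\partial K}.
\end{equation*}
Note that the interface contributions on $\Gamma^I$ enter with a positive sign on both the $s$ and $d$ sides, so no cancellation occurs.

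Next I would bound $\tnorm{\boldsymbol{v}_h}_v$ term by term using \cref{eq:lifting} and \cref{eq:locallifting}. The scaling gives $\norm[0]{\nabla v_h}_K^2 \le C h_K\norm[0]{\bar{q}_h^s}^2_{\partial K}$ and $h_K^{-1}\norm[0]{v_h-\bar{v}_h}^2_{\partial K} \le C h_K\norm[0]{\bar{q}_h^s}^2_{\partial K}$ for $K \in \mathcal{T}^s$, controlling $\tnorm{\boldsymbol{v}_h}_{v,s}^2$; similarly $\norm{v_h}_{\Omega^d}^2 \le C\sum_{K\in\mathcal{T}^d} h_K^{3}\norm[0]{\bar{q}_h^d}^2_{\partial K}$, dominated (up to mesh-independent factors, for $h$ bounded) by $C\sum h_K\norm[0]{\bar{q}_h^d}^2_{\partial K}$; finally $\norm[0]{\bar{v}_h^t}_{\Gamma^I} = 0$. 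Combining,
\begin{equation*}
  \tnorm{\boldsymbol{v}_h}_v^2 \le C\sum_{j=s,d}\sum_{K\in\mathcal{T}^j}h_K\norm[0]{\bar{q}_h^j}^2_{\partial K}.
\end{equation*}

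Dividing the numerator identity by $\tnorm{\boldsymbol{v}_h}_v$ and inserting this bound yields exactly the claimed inequality with $\bar{c}_{\rm inf}$ depending only on the lifting constants. No real obstacle is anticipated; the only subtle point is choosing the $h_K$-scaling of the lifting correctly so that numerator and denominator pair up to produce the weighted boundary norm $\sum_K h_K\|\bar{q}_h^j\|_{\partial K}^2$ on both sides, and verifying that the Darcy $L^2$-contribution $\norm{v_h}_{\Omega^d}$ does not spoil the estimate, which is ensured by the extra factor of $h_K$ picked up from $\norm[0]{L\bar{q}_h^d}_K$.
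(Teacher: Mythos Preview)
Your proof is correct and uses the same essential tool as the paper—the BDM local lifting $L$ with the facet test function $\bar{v}_h=0$—but with one neat twist: you scale the lifting cellwise by $h_K$, setting $v_h|_K=h_K L\bar{q}_h^j$, whereas the paper takes the unscaled choice $w_h|_K=L\bar{q}_h^j$. This changes where the powers of $h_K$ land. In your version the numerator already equals $\sum_{j}\sum_K h_K\norm[0]{\bar{q}_h^j}_{\partial K}^2$ and the denominator is bounded by the square root of the same quantity, so the desired inequality follows immediately with a constant depending only on the lifting constants and the (harmless) bound $h_K\le C_0$. In the paper's version the numerator is $\sum_{j}\sum_K \norm[0]{\bar{q}_h^j}_{\partial K}^2$ while the denominator involves $\sum_{j}\sum_K h_K^{-1}\norm[0]{\bar{q}_h^j}_{\partial K}^2$; matching these to the target weighted norm forces the paper to invoke a factor $h_{\min}/h_{\max}$, so their constant $\bar{c}_{\rm inf}=C\,h_{\min}/h_{\max}$ is $h$-independent only under quasi-uniformity. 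Your scaling avoids this and yields a genuinely shape-regularity-only constant, which is a small but real improvement over the paper's argument.
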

\begin{proof}
  Let $\bar{q}_h^j \in \bar{Q}_h^j$ and set $w_h^j := L\bar{q}_h^j$
  for $j =s,d$. Since $w_h^j \in \sbr[0]{P_k(K)}^{\rm dim}$ for all
  $K \in \mathcal{T}^j$, $j = s, d$,
  then $w_h = w_h^s + w_h^d \in \sbr[0]{P_k(K)}^{\rm dim}$ for
  all $K \in \mathcal{T}$. Furthermore, by \cref{eq:lifting},
  \begin{equation}
    \label{eq:infsup-7}
    \tnorm{(w_h,0)}_{v,s}^2
    = \sum_{K\in \mathcal{T}^s}\del[2]{\norm[0]{\nabla (L\bar{q}_h^s)}_K^2 + h_K^{-1}\norm[0]{L\bar{q}_h^s}_{\partial K}^2}
    \le C\sum_{K\in \mathcal{T}^s}h_K^{-1}\norm[0]{\bar{q}_h^s}_{\partial K}^2.
  \end{equation}
  Next, using \cref{eq:locallifting},
  \begin{equation}
    \label{eq:infsup-8}
    \norm{w_h}_{\Omega^d}^2
    = \norm[0]{w_h^d}_{\Omega^d}^2
    = \sum_{K\in \mathcal{T}^d}\norm[0]{L\bar{q}_h^d}_K^2
    \le C\sum_{K\in \mathcal{T}^d}h_K\norm[0]{\bar{q}_h^d}_{\partial K}^2
    \le C\sum_{K\in \mathcal{T}^d}h_K^{-1}\norm[0]{\bar{q}_h^d}_{\partial K}^2,
  \end{equation}
  where we used $h_K < 1 < h_K^{-1}$. Then, combining
  \cref{eq:infsup-7,eq:infsup-8},
  \begin{equation}
    \tnorm{(w_h,0)}_v^2\leq C\sum_{j=s,d}\del[2]{\sum_{K\in \mathcal{T}^j}h_K^{-1}\norm[0]{\bar{q}_h^j}_{\partial K}^2}.
  \end{equation}
  Noting that $w_h^j\cdot n=\bar{q}_h^j$, $j=s,d$ we find
  \begin{align*}
    \sup_{\substack{\boldsymbol{v}_h \in \boldsymbol{V}_h \\ \boldsymbol{v}_h \ne 0}}\frac{\sum_{j=s,d}\del[1]{b_h^{j,2}(\bar{q}_h^j,v_h)
     + b_h^{I,j}(\bar{q}_h^j,\bar{v}_h)}}{\tnorm{\boldsymbol{v}_h}_{v}}
    &\ge\frac{\sum_{j=s,d}\del[2]{\sum_{K\in \mathcal{T}^j}\norm[0]{\bar{q}_h^j}_{\partial K}^2}}{\tnorm{(w_h,0)}_{v}}
    \\
    &\ge \frac{\sum_{j=s,d}\del[2]{\sum_{K\in \mathcal{T}^j}\norm[0]{\bar{q}_h^j}_{\partial K}^2}}
      {\del[2]{C\sum_{j=s,d}\del[2]{\sum_{K\in \mathcal{T}^j}h_K^{-1}\norm[0]{\bar{q}_h^j}_{\partial K}^2}}^{1/2}}
    \\
    &\ge C\del[2]{\tfrac{h_{\min}}{h_{\max}}}\del[2]{\sum_{j=s,d}\sum_{K\in \mathcal{T}^j}h_K\norm[0]{\bar{q}_h^j}_{\partial K}^2}^{1/2}.
  \end{align*}
  The result follows with
  $\bar{c}_{{\rm inf}} = C\del[2]{\tfrac{h_{\min}}{h_{\max}}}$. \qed
\end{proof}

Using \cref{lem:infsup_bh1} and \cref{lem:infsup_bh2}, the proof to
\cref{thm:infsup} then follows the steps as
\cite[Lemma~1]{Rhebergen:2018b} and is therefore omitted.

An immediate consequence of \cref{lem:coercivity_ahs_ah} and
\cref{thm:infsup} is existence and uniqueness of a solution to
\cref{eq:hdgwf}, as shown by the following proposition.

\begin{proposition}[Existence and uniqueness]
  \label{prop:existenceUniqueness}
  If $\beta>\beta_0$, there exists a unique solution $(\boldsymbol{u}_h,
  \boldsymbol{p}_h)\in \boldsymbol{X}_h$ to \cref{eq:hdgwf}.
\end{proposition}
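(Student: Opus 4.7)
The plan is to reduce existence to uniqueness and then apply the ingredients already at hand. Since (\ref{eq:hdgwf}) is a square finite-dimensional linear system (the trial and test spaces coincide and are finite-dimensional), existence and uniqueness are equivalent, so it suffices to show that the homogeneous problem (with $f^s = 0$ and $f^d = 0$) admits only the trivial solution.

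For uniqueness I would first isolate the velocity. Testing the homogeneous version of (\ref{eq:hdgwf}) with $(\boldsymbol{v}_h, \boldsymbol{q}_h) = (0, \boldsymbol{p}_h)$ gives
\[
\sum_{j=s,d}\bigl(b_h^{j}(\boldsymbol{p}_h^j, u_h) + b_h^{I,j}(\bar{p}_h^j, \bar{u}_h)\bigr) = 0,
\]
and testing with $(\boldsymbol{v}_h, \boldsymbol{q}_h) = (\boldsymbol{u}_h, 0)$ produces the same $b$-sum plus $a_h(\boldsymbol{u}_h, \boldsymbol{u}_h)$. Subtracting the two identities yields $a_h(\boldsymbol{u}_h, \boldsymbol{u}_h) = 0$, and \cref{lem:coercivity_ahs_ah} (using the standing assumption $\beta > \beta_0$) upgrades this to $\tnorm{\boldsymbol{u}_h}_{v} = 0$. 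Reading off the definition of $\tnorm{\cdot}_{v}$, this forces $u_h$ piecewise constant on $\mathcal{T}^s$ with $u_h = \bar{u}_h$ on every cell boundary; continuity of $\bar{u}_h$ across facets of $\mathcal{T}^s$ together with the built-in condition $\bar{u}_h = 0$ on $\Gamma^s$ then propagate $u_h = 0$ throughout $\Omega^s$. Combined with $u_h = 0$ on $\Omega^d$ (also controlled by $\tnorm{\cdot}_{v}$), this gives $\boldsymbol{u}_h = 0$.

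With $\boldsymbol{u}_h = 0$, the homogeneous system collapses to
\[
\sum_{j=s,d}\bigl(b_h^{j}(\boldsymbol{p}_h^j, v_h) + b_h^{I,j}(\bar{p}_h^j, \bar{v}_h)\bigr) = 0 \quad \forall\, \boldsymbol{v}_h \in \boldsymbol{V}_h,
\]
so \cref{thm:infsup} immediately forces $\tnorm{\boldsymbol{p}_h}_{p} = 0$, whence $\boldsymbol{p}_h = 0$.

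I do not anticipate any substantive obstacle: this is the textbook consequence of a coercivity/inf-sup pair. The one subtlety worth flagging is that the two $b$-blocks in $B_h$ carry the same sign (rather than a skew-symmetric one), which is precisely why the argument must isolate the coupling by testing once with $\boldsymbol{v}_h = 0$ and once with $\boldsymbol{q}_h = 0$, rather than relying on any cancellation in $B_h((\boldsymbol{u}_h, \boldsymbol{p}_h), (\boldsymbol{u}_h, \boldsymbol{p}_h))$.
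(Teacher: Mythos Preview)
Your proposal is correct and follows essentially the same route as the paper: reduce to uniqueness, use coercivity of $a_h$ to kill $\boldsymbol{u}_h$, then invoke the inf-sup condition to kill $\boldsymbol{p}_h$. The only cosmetic difference is that the paper obtains $a_h(\boldsymbol{u}_h,\boldsymbol{u}_h)=0$ in a single step by testing with $(\boldsymbol{v}_h,\boldsymbol{q}_h)=(\boldsymbol{u}_h,-\boldsymbol{p}_h)$, which is exactly the linear combination of your two test choices; your added justification that $\tnorm{\cdot}_v$ is a genuine norm on $\boldsymbol{V}_h$ is a welcome detail the paper leaves implicit.
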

\begin{proof}
  Since \cref{eq:hdgwf} is linear, it is sufficient to show uniqueness.
  Let $f^s = f^d = 0$. Let us take $\boldsymbol{v}_h = \boldsymbol{u}_h$
  and $\boldsymbol{q}_h = -\boldsymbol{p}_h$ in \cref{eq:hdgwf}. Then
  $a_h(\boldsymbol{u}_h, \boldsymbol{u}_h) = 0$ which, together with the
  coercivity result \cref{ineq:coer2}, implies that
  $\tnorm{\boldsymbol{u}_h}_{v} = 0$, that is, $\boldsymbol{u}_h = 0$.
  Inserting this into \cref{eq:hdgwf} with $f^s = f^d = 0$, we find
  \begin{equation}
    \label{eq:existence-1}
    \sum_{j=s,d}\del[1]{b_h^j(\boldsymbol{p}_h^j,v_h)+b_h^{I,j}(\bar{p}_h^j,\bar{v}_h)}=0
    \quad \forall \boldsymbol{v}_h \in \boldsymbol{V}_h.
  \end{equation}
  By the inf-sup condition in \cref{thm:infsup} this implies that
  $\boldsymbol{p}_h = 0$. \qed
\end{proof}

\section{Error analysis of the Stokes--Darcy system}
\label{sec:erroranalysisstokesdarcy}

In this section we present \emph{a priori} error analysis of the method
in \cref{eq:hdgwf}. For this we require the standard $L^2$-projection
operators onto $Q_h$ and $\bar{Q}_h^j$, $j = s, d$ which we denote,
respectively, by $\Pi_Q$ and $\bar{\Pi}_Q^j$. It can be shown,
e.g.~\cite{Pietro:book}, that for $k \ge 0$ and $0 \le \ell \le k$,
%
\begin{subequations}
  \begin{align}
    \norm[0]{ q-\Pi_Q q }_{K} & \le C h_K^{\ell} \norm[0]{ q }_{H^{\ell}(K)} && \forall q \in H^{\ell}(K),
    \\
    \norm[0]{ q-\bar{\Pi}_Q^j q }_{\partial K}
      & \le Ch_K^{\ell+1/2} \norm[0]{q}_{H^{\ell+1}(K)} && \forall q \in H^{\ell+1}(K),
  \end{align}
  \label{eq:l2projectionQQbar}
\end{subequations}
where $C$ is a generic constant independent of $h$.

It will be convenient to split the error into approximation and
interpolation errors:
\begin{alignat*}{4}
  u-u_h&=&\xi_u-\zeta_u, \quad \gamma(u)-\bar{u}_h&=\bar{\xi}_u-\bar{\zeta}_u,  \\
  p-p_h&=&\xi_p-\zeta_p, \quad \gamma(p^j)-\bar{p}_h^j&=\bar{\xi}_p^j-\bar{\zeta}_p^j, \quad j=s, d,
\end{alignat*}
where
\begin{alignat*}{8}
  \xi_u &:=&u - \Pi_V u,\quad \zeta_u&:=&u_h-\Pi_V u, \quad \bar{\xi}_u  &:=&\gamma(u^s)-\bar{\Pi}_{V} u^s,
  \quad \bar{\zeta}_u&:=&\bar{u}_h - \bar{\Pi}_V u^s, &\\
  \xi_p&:=&p-\Pi_Q p, \quad \zeta_p&:=&p_h-\Pi_Q p, \quad \bar{\xi}_p^j&:=&\gamma(p^j)-\bar{\Pi}_Q^j p^j,
  \quad \bar{\zeta}_p^j&:=&\bar{p}_h^j-\bar{\Pi}_Q^j p^j, & \quad j=s, d.
\end{alignat*}
To be consistent with our notation, we use $\boldsymbol{\xi}_u :=
(\xi_u,\bar{\xi}_u)$, $\boldsymbol{\xi}_p :=
(\xi_p,\bar{\xi}_p^s,\bar{\xi}_p^d)$ and $\boldsymbol{\xi}_p^{j} :=
(\xi_p,\bar{\xi}_p^j)$, $j=s,d$. Expressions $\boldsymbol{\zeta}_u$,
$\boldsymbol{\zeta}_p$ and $\boldsymbol{\zeta}_p^j$ are defined
similarly. We first present three results that will be useful in
following sections.
\begin{lemma}[$\tnorm{\cdot}_{v,s}$ and $\tnorm{\cdot}_v$-norm
  interpolation error estimates]
  \label{lem:vs}
  Suppose that $u$ is such that $u^s\in \sbr[0]{H^{\ell}(\Omega^s)}^{\rm
  dim}$, and $u^d\in \sbr[0]{H^{\ell-1}(\Omega^d)}^{\rm dim}$, $ 2\leq
  \ell\leq k+1$. Then
  \begin{subequations}
    \begin{align}
    \label{ineq:vs+}
      \tnorm{\boldsymbol{\xi}_u}_{v',s} &\le Ch^{\ell-1}\norm{u}_{H^{\ell}(\Omega^s)},
      \\
      \label{ineq:v+}
      \tnorm{\boldsymbol{\xi}_u}_{v'}  &\le Ch^{\ell-1}\del[1]{\norm[0]{u}_{H^{\ell}(\Omega^s)} + \norm[0]{u}_{H^{\ell-1}(\Omega^d)}}.
    \end{align}
  \end{subequations}
\end{lemma}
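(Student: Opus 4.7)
The plan is to unfold the definitions of $\tnorm{\cdot}_{v',s}$ and $\tnorm{\cdot}_{v'}$ and estimate each constituent term using the approximation properties already stated in \cref{lem:BDM} and \cref{eq:l2projectionVbar}. Both estimates reduce to a termwise bookkeeping exercise; the only non‑obvious piece is rewriting the facet difference $\xi_u - \bar{\xi}_u$ in a way that lets \cref{eq:l2projectionVbar_b} apply.

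For \cref{ineq:vs+}, I expand
\[
\tnorm{\boldsymbol{\xi}_u}_{v',s}^2 =\sum_{K\in\mathcal{T}^s}\del[1]{\norm{\nabla\xi_u}_K^2 + h_K^{-1}\norm{\xi_u-\bar{\xi}_u}_{\partial K}^2 + h_K^2 \envert{\xi_u}_{H^2(K)}^2}.
\]
The gradient and $H^2$ terms are handled directly by \cref{lem:BDM}~\eqref{lem:BDM-iv} with $m=1$ and $m=2$, each yielding a factor $h_K^{\ell-1}\|u\|_{H^{\ell}(K)}$. For the facet term I use that $u^s$ is single‑valued on $\partial K$ to write $\xi_u - \bar{\xi}_u = (u-\Pi_V u) - (u^s - \bar{\Pi}_V u^s) = \bar{\Pi}_V u^s - \Pi_V u$ on facets in $\overline{\Omega}^s$, then invoke \cref{eq:l2projectionVbar_b}. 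Summing over $K\in\mathcal{T}^s$ and bounding $h_K\le h$ gives the claimed $h^{\ell-1}\|u\|_{H^{\ell}(\Omega^s)}$ bound.

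For \cref{ineq:v+}, I only need to control the two additional pieces
\[
\norm{\xi_u}_{\Omega^d}^2 \quad \text{and} \quad \norm{\bar{\xi}_u^t}_{\Gamma^I}^2.
\]
The volume term on $\Omega^d$ is bounded by applying \cref{lem:BDM}~\eqref{lem:BDM-iv} with $m=0$ on each $K\in\mathcal{T}^d$, with regularity index $\ell-1$, which delivers $h_K^{\ell-1}\|u\|_{H^{\ell-1}(K)}$; this uses the fact that the BDM operator acts domain‑wise. For the tangential interface term I drop the tangential projection and use
\[
\norm{\bar{\xi}_u^t}_{\Gamma^I}^2 \le \norm{u^s - \bar{\Pi}_V u^s}_{\Gamma^I}^2 \le \sum_{\substack{K\in\mathcal{T}^s\\ \partial K\cap\Gamma^I\ne\emptyset}}\norm{u^s-\bar{\Pi}_V u^s}_{\partial K}^2,
\]
and then apply \cref{eq:l2projectionVbar_a}; the resulting scaling is $h^{2\ell-1}$, which is actually one half power better than needed.

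I don't expect any real obstacle here: the result is essentially a repackaging of standard interpolation estimates. The only point to watch is the bookkeeping for the facet term in step one, where a naive triangle inequality $\|\xi_u-\bar{\xi}_u\|_{\partial K}\le\|\xi_u\|_{\partial K}+\|\bar{\xi}_u\|_{\partial K}$ would still work but would require an extra trace inequality on $\xi_u$; the rewrite $\xi_u-\bar{\xi}_u=\bar{\Pi}_V u^s - \Pi_V u$ is cleaner because \cref{eq:l2projectionVbar_b} packages exactly that comparison.
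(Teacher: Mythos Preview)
Your proposal is correct and follows essentially the same approach as the paper: rewrite the facet difference as $\xi_u-\bar{\xi}_u=\bar{\Pi}_V u^s-\Pi_V u$ and apply \cref{eq:l2projectionVbar_b}, then use \cref{lem:BDM}\,\eqref{lem:BDM-iv} for the remaining cell terms. Your treatment of the interface tangential term via \cref{eq:l2projectionVbar_a} is slightly more explicit than the paper's one-line appeal to \cref{lem:BDM}\,\eqref{lem:BDM-iv}, but the argument is the same in spirit.
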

\begin{proof}
  Recall that
  \begin{equation}
    \label{eq:vs-1}
    \tnorm{\boldsymbol{\xi}_u}_{v',s}^2
    = \sum_{K\in \mathcal{T}^s}\del[1]{\norm[0]{\nabla \xi_u}_K^2 + h_K^{-1}\norm[0]{\xi_u-\bar{\xi}_u}_{\partial K}^2}
    + \sum_{K\in \mathcal{T}^s}h_K^2\envert[0]{\xi_u}_{2,K}^2.
  \end{equation}
  Then, since $\Pi_V u\in \sbr[0]{P_k(F)}^{\rm dim}$ for any $F\in
  \mathcal{F}^s$ we find by~\cref{eq:l2projectionVbar_b}
  \begin{equation}
    \label{eq:xiuxibaru}
    \norm[0]{\xi_u-\bar{\xi}_u}_{\partial K}^2
    = \norm[0]{\Pi_Vu-\bar{\Pi}_Vu}_{\partial K}^2
    \leq Ch_K^{2\ell-1} \norm[0]{u}_{H^{\ell}(K)}^2,
  \end{equation}
  \Cref{ineq:vs+} follows by combining \cref{eq:vs-1} and
  \cref{eq:xiuxibaru} and using once again
  \cref{lem:BDM}~\eqref{lem:BDM-iv}. \Cref{ineq:v+} follows from this
  and \cref{lem:BDM}~\eqref{lem:BDM-iv}. \qed
\end{proof}
\begin{lemma}[$\tnorm{\cdot}_{p,j}$-norm interpolation error]
  \label{lem:pj}
  For $j=s, d$, let $p^j\in H^{\ell}(\Omega^j)$, $0\leq \ell\leq k$. Then
  \begin{align*}
    \tnorm{\boldsymbol{\xi}_p^{j}}_{p,j}&\le Ch^{\ell} \|p\|_{H^{\ell}(\Omega^j)}.
  \end{align*}
\end{lemma}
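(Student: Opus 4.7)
The plan is to unpack the norm $\tnorm{\boldsymbol{\xi}_p^{j}}_{p,j}$ into its two defining pieces and bound each with the $L^2$-projection error estimates stated in \cref{eq:l2projectionQQbar}.

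First, using the definition of $\tnorm{\cdot}_{p,j}$, I would write
\begin{equation*}
  \tnorm{\boldsymbol{\xi}_p^{j}}_{p,j}^2
  = \norm[0]{\xi_p}_{\Omega^j}^2
  + \sum_{K\in\mathcal{T}^j} h_K \norm[0]{\bar{\xi}_p^j}_{\partial K}^2
  = \norm[0]{p-\Pi_Q p}_{\Omega^j}^2
  + \sum_{K\in\mathcal{T}^j} h_K \norm[0]{p-\bar{\Pi}_Q^j p}_{\partial K}^2,
\end{equation*}
so the proof reduces to bounding two interpolation residuals.

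For the first (volume) term I would apply the cell-wise $L^2$-projection estimate from \cref{eq:l2projectionQQbar} directly: breaking $\norm[0]{p-\Pi_Q p}_{\Omega^j}^2$ into a sum over $K\in\mathcal{T}^j$, each piece is bounded by $Ch_K^{2\ell}\norm[0]{p}_{H^\ell(K)}^2$, and replacing $h_K$ by $h$ and summing recovers $Ch^{2\ell}\norm[0]{p}_{H^\ell(\Omega^j)}^2$.

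For the second (facet) term I would use the facet $L^2$-projection estimate from \cref{eq:l2projectionQQbar}, applied at index $\ell-1$ (the shift is what absorbs the trace), to get $\norm[0]{p-\bar{\Pi}_Q^j p}_{\partial K}^2\le Ch_K^{2\ell-1}\norm[0]{p}_{H^{\ell}(K)}^2$. Multiplying by $h_K$ and summing over $K\in\mathcal{T}^j$ yields $\sum_K h_K\norm[0]{p-\bar{\Pi}_Q^j p}_{\partial K}^2\le C\sum_K h_K^{2\ell}\norm[0]{p}_{H^\ell(K)}^2\le Ch^{2\ell}\norm[0]{p}_{H^\ell(\Omega^j)}^2$. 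Adding the two bounds and taking a square root gives the claim.

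There is no real obstacle: the argument is a direct application of the projection estimates already stated in \cref{eq:l2projectionQQbar} together with $h_K\le h$. The only mildly subtle point is that the facet estimate is stated with a shift in the Sobolev index, so one must apply it at $\ell-1$ rather than $\ell$ in order to end up with the same $\norm[0]{p}_{H^\ell(\Omega^j)}$ appearing in the final bound; otherwise the computation is entirely routine.
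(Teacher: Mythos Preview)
Your proposal is correct and follows exactly the same approach as the paper: expand $\tnorm{\boldsymbol{\xi}_p^{j}}_{p,j}^2$ via its definition and apply the $L^2$-projection estimates from \cref{eq:l2projectionQQbar} to each piece. The paper's proof is simply a terser version of what you wrote.
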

\begin{proof}
  By definition of $\tnorm{\cdot}_{p,j}$ and the properties of the
  $L^2$-projections $\Pi_Q$ and $\bar{\Pi}_Q^j$ given in
  \cref{eq:l2projectionQQbar},
  \begin{equation*}
    \tnorm{\boldsymbol{\xi}_p^{j}}_{p,j}^2 = \norm[0]{\xi_p}_{\Omega^j}^2
    + \sum_{K\in \mathcal{T}^j} h_K \norm[0]{\bar{\xi}_p^j}_{\partial K}^2
    \leq Ch^{2\ell}\norm[0]{p}^2_{H^{\ell}(\Omega^j)}.
  \end{equation*}
  \qed
\end{proof}
\begin{lemma}[Error equation]
  \label{lem:erroreqn}
  There holds
  \begin{equation}
    B_h((\boldsymbol{\zeta}_u, \boldsymbol{\zeta}_p), (\boldsymbol{v}_h, \boldsymbol{q}_h)) =
    B_h((\boldsymbol{\xi}_u, \boldsymbol{\xi}_p), (\boldsymbol{v}_h, \boldsymbol{q}_h))
    \quad \forall (\boldsymbol{v}_h, \boldsymbol{q}_h) \in \boldsymbol{X}_h.
  \end{equation}
\end{lemma}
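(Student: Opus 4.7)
The proof is a standard Galerkin orthogonality argument, made possible by the consistency result in \cref{lem:consistency}. My plan is to observe that by the linearity of $B_h$ in its first argument,
\begin{equation*}
  B_h((\boldsymbol{\xi}_u, \boldsymbol{\xi}_p), (\boldsymbol{v}_h, \boldsymbol{q}_h)) - B_h((\boldsymbol{\zeta}_u, \boldsymbol{\zeta}_p), (\boldsymbol{v}_h, \boldsymbol{q}_h))
  = B_h((\boldsymbol{\xi}_u - \boldsymbol{\zeta}_u, \boldsymbol{\xi}_p - \boldsymbol{\zeta}_p), (\boldsymbol{v}_h, \boldsymbol{q}_h)),
\end{equation*}
and by the splitting of the error introduced just before the lemma, $\boldsymbol{\xi}_u - \boldsymbol{\zeta}_u = \boldsymbol{u} - \boldsymbol{u}_h$ and $\boldsymbol{\xi}_p - \boldsymbol{\zeta}_p = \boldsymbol{p} - \boldsymbol{p}_h$, where $\boldsymbol{u} = (u, \gamma(u))$ and $\boldsymbol{p} = (p, \gamma(p^s), \gamma(p^d))$ as in \cref{lem:consistency}. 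So it suffices to show
\begin{equation*}
  B_h((\boldsymbol{u} - \boldsymbol{u}_h, \boldsymbol{p} - \boldsymbol{p}_h), (\boldsymbol{v}_h, \boldsymbol{q}_h)) = 0 \quad \forall (\boldsymbol{v}_h, \boldsymbol{q}_h) \in \boldsymbol{X}_h.
\end{equation*}

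To establish this Galerkin-orthogonality identity, I would apply \cref{lem:consistency} to $(\boldsymbol{u}, \boldsymbol{p})$ — which is legitimate since $(u,p) \in X$ solves \cref{eq:system,eq:interface} — to obtain
\begin{equation*}
  B_h((\boldsymbol{u}, \boldsymbol{p}), (\boldsymbol{v}_h, \boldsymbol{q}_h)) = \int_{\Omega^s} f^s \cdot v_h \dif x + \int_{\Omega^d} f^d q_h \dif x,
\end{equation*}
valid for every test pair in $\boldsymbol{X}_h \subset \boldsymbol{X}(h)$. On the other hand, the discrete problem \cref{eq:hdgwf} satisfied by $(\boldsymbol{u}_h, \boldsymbol{p}_h)$ has the identical right-hand side for any $(\boldsymbol{v}_h, \boldsymbol{q}_h) \in \boldsymbol{X}_h$. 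Subtracting and using bilinearity yields the desired vanishing, and combining with the first display completes the proof.

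There is no real obstacle here: the argument is purely algebraic, relying only on (i) the inclusion $\boldsymbol{X}_h \subset \boldsymbol{X}(h)$ so that consistency applies to discrete test functions, (ii) the bilinearity of $B_h$, and (iii) the additive decomposition of the errors $u - u_h$, $p - p_h$, $\gamma(u) - \bar{u}_h$, $\gamma(p^j) - \bar{p}_h^j$ into $\xi - \zeta$ components, which holds by construction of the projections $\Pi_V$, $\bar{\Pi}_V$, $\Pi_Q$ and $\bar{\Pi}_Q^j$. The only mildly delicate point to verify is that every component of $\boldsymbol{\xi}$ and $\boldsymbol{\zeta}$ lies in the appropriate space so that $B_h$ makes sense on them — but this is immediate since $\boldsymbol{\zeta}_u, \boldsymbol{\zeta}_p$ lie in the discrete spaces and $\boldsymbol{\xi}_u, \boldsymbol{\xi}_p$ lie in $\boldsymbol{X}(h)$ by the regularity assumptions implicit in the definition of the interpolants.
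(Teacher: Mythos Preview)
Your proposal is correct and follows essentially the same approach as the paper: subtract the discrete problem \cref{eq:hdgwf} from the consistency identity of \cref{lem:consistency} to obtain Galerkin orthogonality, then split the error into interpolation and approximation parts. The paper's proof is just a terser version of exactly what you wrote.
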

\begin{proof}
  Subtracting \cref{eq:hdgwf} from the consistency equation in
  \cref{lem:consistency} we obtain
  \begin{equation}\label{eq:erroreqn-1}
    B_h((u-u_h,\gamma(u^s)-\bar{u}_h,p-p_h,\gamma(p^s) - \bar{p}_h^s,\gamma(p^d)-\bar{p}_h^d),
      (\boldsymbol{v}_h, \boldsymbol{q}_h))=0,
  \end{equation}
  for all $(\boldsymbol{v}_h, \boldsymbol{q}_h) \in \boldsymbol{X}_h$.
  The result follows simply by splitting the errors. \qed
\end{proof}

\subsection{Energy-norm error estimates}

In this section we determine error estimates for the velocity and
pressure in the energy-norm.

\begin{theorem}[Energy-norm error estimates]
  \label{thm:energy_estimate}
  Let $(u,p)\in X$ be the solution of the Stokes--Darcy system
  \cref{eq:system,eq:interface} such that $u^s \in
  \sbr[0]{H^{k+1}(\Omega^s)}^{\rm dim}$ and $u^d \in
  \sbr[0]{H^k(\Omega^d)}^{\rm dim}$ for $k \ge 1$ and let
  $(\boldsymbol{u}_h, \boldsymbol{p}_h) \in \boldsymbol{X}_h$ solve
  \cref{eq:hdgwf}. Then
  \begin{subequations}
    \begin{equation}
      \label{eq:velocity_energy_estimate}
      \tnorm{\boldsymbol{u}-\boldsymbol{u}_h}_v
      \le Ch^{k} \del[1]{ \norm{u}_{H^{k+1}(\Omega^s)} + \norm{u}_{H^k(\Omega^d)} }.
    \end{equation}
    In addition, if $p \in H^k(\Omega^j)$, $j=s,d$, then
    \begin{equation}
      \label{eq:pressure_energy_estimate}
      \tnorm{\boldsymbol{p}-\boldsymbol{p}_h}_{p}
      \le
      Ch^k\del[1]{\norm{u}_{H^{k+1}(\Omega^s)} + \norm{u}_{H^k(\Omega^d)}
        + \norm[0]{p}_{H^k(\Omega^s)} + \norm[0]{p}_{H^k(\Omega^d)}}.
    \end{equation}
    \label{eq:velocitypressure_energy_estimate}
  \end{subequations}
\end{theorem}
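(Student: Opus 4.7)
The plan is to combine the standard mixed-method strategy with the Galerkin orthogonalities built into the interpolants $\Pi_V$, $\Pi_Q$, and $\bar{\Pi}_Q^j$. After splitting $u - u_h = \xi_u - \zeta_u$ and $p - p_h = \xi_p - \zeta_p$ (with analogous splittings on the facets), the triangle inequality together with the interpolation bounds in \cref{lem:vs,lem:pj} reduces the task to estimating $\tnorm{\boldsymbol{\zeta}_u}_v$ and $\tnorm{\boldsymbol{\zeta}_p}_{p}$. I would control the first via coercivity (\cref{lem:coercivity_ahs_ah}) and the second via the inf-sup condition (\cref{thm:infsup}), driving both through the error equation of \cref{lem:erroreqn}.

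For the velocity, I would test the error equation with $\boldsymbol{v}_h = \boldsymbol{\zeta}_u$ and $\boldsymbol{q}_h = -\boldsymbol{\zeta}_p$, so that the couplings involving $\boldsymbol{\zeta}_p$ cancel on the left and coercivity yields $C\tnorm{\boldsymbol{\zeta}_u}_v^2 \le B_h((\boldsymbol{\xi}_u, \boldsymbol{\xi}_p), (\boldsymbol{\zeta}_u, -\boldsymbol{\zeta}_p))$. On the right, the cell-interior contributions $b_h^j(\boldsymbol{\xi}_p^j, \zeta_u)$ and $b_h^{I,j}(\bar{\xi}_p^j, \bar{\zeta}_u)$ vanish because $\nabla\cdot\zeta_u \in P_{k-1}(K)$ cell-wise (together with $\int_\Omega \xi_p \dif x = 0$) and because $\zeta_u \cdot n^j|_F$ and $\bar{\zeta}_u \cdot n^j|_F$ lie in $P_k(F)$, which is $L^2$-orthogonal to $\bar{\xi}_p^j$; similarly $b_h^j(\boldsymbol{\zeta}_p^j, \xi_u) = 0$ by the BDM orthogonalities in \cref{lem:BDM}. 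The main obstacle is the one surviving interface contribution $b_h^{I,j}(\bar{\zeta}_p^j, \bar{\xi}_u)$: because $\bar{\Pi}_V$ is a Scott--Zhang interpolant, $\bar{\xi}_u$ does not inherit BDM orthogonality. I would resolve this by the decomposition $\bar{\xi}_u = (u^s - \Pi_V u^s) + (\Pi_V u^s - \bar{\Pi}_V u^s)$: the first piece is annihilated by BDM property~\eqref{lem:BDM-ii} against $\bar{\zeta}_p^j \in P_k(F)$, while the second is handled by a facet-wise weighted Cauchy--Schwarz together with \cref{eq:l2projectionVbar_b}, giving $|b_h^{I,j}(\bar{\zeta}_p^j, \bar{\xi}_u)| \le Ch^k\norm{u}_{H^{k+1}(\Omega^s)}\tnorm{\boldsymbol{\zeta}_p}_{p}$. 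Combining with continuity of $a_h$ (\cref{ineq:continuity-ah}) and the $\tnorm{\cdot}_v$--$\tnorm{\cdot}_{v'}$ norm equivalence on $\boldsymbol{V}_h$ then yields
\begin{equation*}
  \tnorm{\boldsymbol{\zeta}_u}_v^2 \le C \tnorm{\boldsymbol{\xi}_u}_{v'} \tnorm{\boldsymbol{\zeta}_u}_v + Ch^k \norm{u}_{H^{k+1}(\Omega^s)}\tnorm{\boldsymbol{\zeta}_p}_{p}.
\end{equation*}

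To close the pressure/velocity coupling, I would apply \cref{thm:infsup}: testing the error equation with $\boldsymbol{q}_h = 0$ and using the same orthogonalities to kill the $\boldsymbol{\xi}_p$ contributions gives $\sum_{j=s,d}(b_h^j(\boldsymbol{\zeta}_p^j, v_h) + b_h^{I,j}(\bar{\zeta}_p^j, \bar{v}_h)) = a_h(\boldsymbol{\xi}_u - \boldsymbol{\zeta}_u, \boldsymbol{v}_h)$, and continuity of $a_h$ then produces $\tnorm{\boldsymbol{\zeta}_p}_{p} \le C(\tnorm{\boldsymbol{\xi}_u}_{v'} + \tnorm{\boldsymbol{\zeta}_u}_v)$. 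Substituting into the displayed inequality and absorbing $\tnorm{\boldsymbol{\zeta}_u}_v$ into the left by Young's inequality gives $\tnorm{\boldsymbol{\zeta}_u}_v \le C(\tnorm{\boldsymbol{\xi}_u}_{v'} + h^k\norm{u}_{H^{k+1}(\Omega^s)})$; invoking \cref{ineq:v+} and the triangle inequality then yields \cref{eq:velocity_energy_estimate}. Finally, inserting this velocity bound back into the inf-sup estimate for $\tnorm{\boldsymbol{\zeta}_p}_{p}$ and combining with \cref{lem:pj} and the triangle inequality delivers \cref{eq:pressure_energy_estimate}.
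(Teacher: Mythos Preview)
Your proposal is correct and follows essentially the same route as the paper: test the error equation with $(\boldsymbol{\zeta}_u,-\boldsymbol{\zeta}_p)$, use coercivity, kill $I_2$ via the $\Pi_Q,\bar\Pi_Q^j$ orthogonalities, handle the surviving interface term in $I_3$ by the decomposition $\bar\xi_u=(u^s-\Pi_Vu^s)+(\Pi_Vu^s-\bar\Pi_Vu^s)$ together with \cref{lem:BDM}\eqref{lem:BDM-ii} and \cref{eq:l2projectionVbar_b}, and close with the inf-sup bound on $\tnorm{\boldsymbol{\zeta}_p}_p$ plus Young's inequality. The only cosmetic difference is that the paper keeps the $I_3$ bound as $C\tnorm{\boldsymbol{\zeta}_p}_p\tnorm{\boldsymbol{\xi}_u}_{v'}$ rather than invoking the $h^k$ rate immediately; also, your parenthetical ``together with $\int_\Omega\xi_p\,dx=0$'' is unnecessary, since $\int_K\xi_p\,\nabla\!\cdot\zeta_u\,dx=0$ already follows cell-wise from $\nabla\!\cdot\zeta_u\in P_{k-1}(K)$ and the $L^2(K)$-orthogonality of $\xi_p$.
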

\begin{proof}
  We will first prove
  \begin{equation}
    \label{eq:firstboundzetau}
    \tnorm{\boldsymbol{\zeta}_u}_v^2 \le c_1 \del{
      \tnorm{ \boldsymbol{\zeta}_u }_v + \tnorm{\boldsymbol{\zeta}_p}_p }\tnorm{ \boldsymbol{\xi}_u }_{v'},
  \end{equation}
  where $c_1 > 0$ is a constant independent of~$h$. Setting
  $(\boldsymbol{v}_h, \boldsymbol{q}_h) = (\boldsymbol{\zeta}_u,
  -\boldsymbol{\zeta}_p)$ in \cref{lem:erroreqn}, and using coercivity
  of $a_h$ \cref{ineq:coer2},
  \begin{equation}
    \label{eq:zetacoerc}
    B_h((\boldsymbol{\xi}_u, \boldsymbol{\xi}_p),(\boldsymbol{\zeta}_u, -\boldsymbol{\zeta}_p))
    =
    B_h((\boldsymbol{\zeta}_u, \boldsymbol{\zeta}_p),(\boldsymbol{\zeta}_u, -\boldsymbol{\zeta}_p))
    =
    a_h(\boldsymbol{\zeta}_u, \boldsymbol{\zeta}_u)
    \ge
    C\tnorm{\boldsymbol{\zeta}_u}_v^2.
  \end{equation}
 We proceed by bounding
 \begin{equation}
   \label{eq:Bh_to_I}
   \begin{split}
     B_h((\boldsymbol{\xi}_u, \boldsymbol{\xi}_p), (\boldsymbol{\zeta}_u, -\boldsymbol{\zeta}_p))
     =& a_h(\boldsymbol{\xi}_u, \boldsymbol{\zeta}_u)
     + \sum_{j=s,d}\del[1]{b_h^j(\boldsymbol{\xi}_p^{j}, \zeta_u) + b_h^{I,j}(\bar{\xi}_p^j, \bar{\zeta}_u)}
     -\sum_{j=s,d} \del[1]{b_h^j(\boldsymbol{\zeta}_p^{j},\xi_u)+b_h^{I,j}(\bar{\zeta}_p^j,\bar{\xi}_u)}
     \\
     =& I_1 +  I_2 + I_3.
   \end{split}
 \end{equation}
 Observe first that $I_2$ disappears by the properties of $\Pi_Q,
 \bar{\Pi}_Q^j$ and \cref{lem:BDM} \eqref{lem:BDM-i}-\eqref{lem:BDM-ii}.
 Consider now $I_1$. By \cref{lem:continuity} and equivalence of the
 norms $\tnorm{\cdot}_v$ and $\tnorm{\cdot}_{v'}$ on $\boldsymbol{V}_h$
 \begin{equation}
   \label{eq:I1}
   I_1 = a_h(\boldsymbol{\xi}_u, \boldsymbol{\zeta}_u)
   \le C\tnorm{ \boldsymbol{\zeta}_u }_v \tnorm{ \boldsymbol{\xi}_u }_{v'}.
 \end{equation}
 We next consider $I_3$ and note that by
 \cref{lem:BDM}~\eqref{lem:BDM-i} and~\eqref{lem:BDM-ii}
 \begin{equation*}
   \begin{split}
     I_3 &=
     - b_h^s(\boldsymbol{\zeta}_p^{s}, \xi_u) - b_h^{I,s}(\bar{\zeta}_p^s, \bar{\xi}_u)
     - b_h^d(\boldsymbol{\zeta}_p^{d}, \xi_u) - b_h^{I,d}(\bar{\zeta}_p^d, \bar{\xi}_u)
     \\
     &=
     - \int_{\Gamma^I}(\bar{p}_h^s - \bar{\Pi}^s_Qp^s)(u^s - \bar{\Pi}_Vu^s)\cdot n^s \dif s
     - \int_{\Gamma^I}(\bar{p}_h^d - \bar{\Pi}^d_Qp^d)(u^s - \bar{\Pi}_Vu^s)\cdot n^s \dif s.
   \end{split}
 \end{equation*}
 Since $(\bar{p}_h^j - \bar{\Pi}^j_Qp^j) \in P_k(F)$ for $j=s,d$, we
 have by \cref{lem:BDM}~\eqref{lem:BDM-ii} that
 \begin{equation}
   \label{eq:I3}
   \begin{split}
     I_3
     &=
     - \int_{\Gamma^I}(\bar{p}_h^s - \bar{\Pi}^s_Qp^s)(\Pi_Vu^s - \bar{\Pi}_Vu^s)\cdot n^s \dif s
     - \int_{\Gamma^I}(\bar{p}_h^d - \bar{\Pi}^d_Qp^d)(\Pi_Vu^s - \bar{\Pi}_Vu^s)\cdot n^s \dif s
     \\
     &=
     - \int_{\Gamma^I}\bar{\zeta}_p^s(\bar{\xi}_u - \xi_u) \cdot n^s \dif s
     - \int_{\Gamma^I}\bar{\zeta}_p^d(\bar{\xi}_u - \xi_u) \cdot n^d \dif s
     \\
     &\le
     \del{\sum_{K\in\mathcal{T}^s}h_k\norm[0]{\bar{\zeta}_p^s}_{\partial K}^2}^{1/2}\tnorm{\boldsymbol{\xi}_u}_{v,s}
     + \del{\sum_{K\in\mathcal{T}^d}h_k\norm[0]{\bar{\zeta}_p^d}_{\partial K}^2}^{1/2}\tnorm{\boldsymbol{\xi}_u}_{v,s}
     \\
     &\le
     C \del{ \tnorm{\boldsymbol{\zeta}_p^s}_{p,s} + \tnorm{\boldsymbol{\zeta}_p^d}_{p,d} }\tnorm{\boldsymbol{\xi}_u}_{v,s}
     \le C \tnorm{\boldsymbol{\zeta}_p}_p \tnorm{\boldsymbol{\xi}_u}_{v'}.
   \end{split}
 \end{equation}
 \Cref{eq:firstboundzetau} follows by combining \cref{eq:zetacoerc,eq:I3}.

 We next prove that
 \begin{equation}
   \label{eq:zetappnorm}
   \tnorm{\boldsymbol{\zeta}_p}_{p}^2
   \le c_2\del[1]{\tnorm{\boldsymbol{\zeta}_u}_{v}^2+\tnorm{\boldsymbol{\xi}_u}_{v'}^2},
 \end{equation}
 where $c_2 > 0$ is a constant independent of $h$. Setting
 $\boldsymbol{q}_h = 0$ in the error equation~\cref{lem:erroreqn}, using
 the projection properties of $\Pi_Q$ and $\bar{\Pi}_Q$,
 applying~\cref{ineq:continuity-ah} and using the equivalence of the
 norms $\tnorm{\cdot}_v$ and $\tnorm{\cdot}_{v'}$ on~$\boldsymbol{V}_h$,
 \begin{equation*}
   \sum_{j=s,d} \del[1]{b_h^{j}(\boldsymbol{\zeta}_p^{j},v_h) + b_h^{I,j}(\bar{\zeta}_p^j,v_h)}
   =
   -a_h(\boldsymbol{\zeta}_u, \boldsymbol{v}_h) + a_h(\boldsymbol{\xi}_u, \boldsymbol{v}_h)
   \le C\tnorm{\boldsymbol{\zeta}_u}_{v}\tnorm{\boldsymbol{v}_h}_{v}
    +C\tnorm{\boldsymbol{\xi}_u}_{v'}\tnorm{\boldsymbol{v}_h}_{v}.
 \end{equation*}
 \Cref{eq:zetappnorm} follows by \cref{thm:infsup}.

 We will now combine \cref{eq:firstboundzetau,eq:zetappnorm}. First
 note, that by Young's inequality, we may bound
 \cref{eq:firstboundzetau} as
 \begin{equation}
   \label{eq:firstboundzetau_2}
   \tnorm{\boldsymbol{\zeta}_u}_v^2 \le
   \frac{c_1\varepsilon}{2} \tnorm{ \boldsymbol{\zeta}_u }_v^2
   + \frac{c_1\delta}{2} \tnorm{\boldsymbol{\zeta}_p}_p^2
   + \frac{c_1}{2}\del{\varepsilon^{-1} + \delta^{-1}}\tnorm{ \boldsymbol{\xi}_u }_{v'}^2,
 \end{equation}
 for any $\varepsilon > 0$ and $\delta > 0$. Multiplying
 \cref{eq:firstboundzetau_2} by a constant $\beta > 0$ and adding to
 \cref{eq:zetappnorm},
 \begin{equation*}
   \tnorm{\boldsymbol{\zeta}_p}_{p}^2 + \beta\tnorm{\boldsymbol{\zeta}_u}_v^2
   \le
   \frac{\beta c_1\delta}{2} \tnorm{\boldsymbol{\zeta}_p}_p^2
   + \del{c_2 + \frac{\beta c_1\varepsilon}{2}} \tnorm{ \boldsymbol{\zeta}_u }_v^2
   + \del{c_2 + \frac{\beta c_1}{2}\del{\varepsilon^{-1} + \delta^{-1}}}\tnorm{ \boldsymbol{\xi}_u }_{v'}^2.
 \end{equation*}
 Choosing $\varepsilon < 2/c_1$, setting $\beta > c_2/(1 -
 c_1\varepsilon/2)$ and choosing $\delta < 2/(\beta c_1)$ we may write
 \begin{equation*}
   \label{eq:zetappnormzeta}
   \del{1 - \frac{\beta c_1\delta}{2}} \tnorm{\boldsymbol{\zeta}_p}_{p}^2
   + \del{\beta - c_2 - \frac{\beta c_1\varepsilon}{2} }\tnorm{\boldsymbol{\zeta}_u}_v^2
   \le
   \del{c_2 + \frac{\beta c_1}{2}\del{\varepsilon^{-1} + \delta^{-1}}}\tnorm{ \boldsymbol{\xi}_u }_{v'}^2,
 \end{equation*}
 resulting in
 \begin{equation}
   \label{eq:zetap_zetau_bounds}
   \tnorm{\boldsymbol{\zeta}_p}_{p} \le C \tnorm{ \boldsymbol{\xi}_u }_{v'}
   \quad \text{and} \quad
   \tnorm{\boldsymbol{\zeta}_u}_v \le C \tnorm{ \boldsymbol{\xi}_u }_{v'}.
 \end{equation}

 The energy-norm error estimates
 \cref{eq:velocitypressure_energy_estimate} now follow by the triangle
 inequality, the bounds in \cref{eq:zetap_zetau_bounds}, and the
 interpolation error estimates from \cref{lem:vs,lem:pj}.  \qed
\end{proof}

\subsection{$L^2$-norm error estimate for the velocity}

We will now determine $L^2$-norm error estimates for the velocity in the
Stokes and Darcy regions separately. To obtain these estimates we
consider the dual problem where $(U, P)$ is the solution to
\cref{eq:system,eq:interface} with $f^s = \Psi \in
\sbr[0]{L^2(\Omega^s)}^{\rm dim}$ and $f^d =
0$~\cite[eq.~(13)]{Girault:2014}. We will assume that this solution to
the dual problem satisfies the following regularity estimates
\begin{subequations}
  \begin{align}
    \|U\|_{H^{2}(\Omega^s)} &\leq C\|\Psi\|_{\Omega^s}, & \quad \|P\|_{H^{1}(\Omega^s)}&\leq C\|\Psi\|_{\Omega^s},
    \\
    \|U\|_{H^{1}(\Omega^d)} &\leq C\|\Psi\|_{\Omega^s}, & \quad \|P\|_{H^{2}(\Omega^d)}&\leq C\|\Psi\|_{\Omega^s},
  \end{align}
  \label{regularity}
\end{subequations}
which will allow us to prove the following result.
\begin{theorem}[$L^2$-norm error estimate for the velocity]
  \label{thm:L2error-vs-vd}
  Let $(u, p) \in X$ be the solution of \cref{eq:system,eq:interface}
  such that $u^s \in \sbr[0]{H^{k + 1}(\Omega^s)}^{\rm dim}$ and $u^d
  \in \sbr[0]{H^{k + 1}(\Omega^d)}^{\rm dim}$ for $k\ge 1$ and let
  $(\boldsymbol{u}_h, \boldsymbol{p}_h) \in \boldsymbol{X}_h$ solve
  \cref{eq:hdgwf}. Then
  \begin{subequations}
    \begin{align}
      \label{eq:L2error-vs}
      \norm{u-u_h}_{\Omega^s}
      &\le Ch^{k+1} \del[1]{ \norm[0]{u}_{H^{k+1}(\Omega^s)} + \norm[0]{u}_{H^k(\Omega^d)}
        + \norm[0]{f^d}_{H^{k}(\Omega^d)}},
      \\
      \label{eq:L2error-vd}
      \norm{u-u_h}_{\Omega^d}
      &\le Ch^{k+1} \del[1]{\|u\|_{H^{k+1}(\Omega^s)} + \|u\|_{H^{k+1}(\Omega^d)}
        + \|f^d\|_{H^k(\Omega^d)}}.
    \end{align}
  \end{subequations}
\end{theorem}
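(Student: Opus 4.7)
The plan is an Aubin--Nitsche duality argument. Split $u-u_h=\xi_u-\zeta_u$; by \cref{lem:BDM}~\eqref{lem:BDM-iv} applied with $m=0$ and $\ell=k+1$, $\norm{\xi_u}_{\Omega^j}\le Ch^{k+1}\norm{u}_{H^{k+1}(\Omega^j)}$ for $j=s,d$, so it remains to bound $\norm{\zeta_u}_{\Omega^j}$ at the same rate.

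For~\cref{eq:L2error-vs} I would introduce the dual problem with source $\Psi:=\zeta_u|_{\Omega^s}$ and $f^d=0$, whose solution $(U,P)$ satisfies~\cref{regularity}. Set $\boldsymbol{U}_I:=(\Pi_V U,\bar\Pi_V U^s)\in\boldsymbol{V}_h$ and $\boldsymbol{P}_I:=(\Pi_Q P,\bar\Pi_Q^s P^s,\bar\Pi_Q^d P^d)\in\boldsymbol{Q}_h$. Applying~\cref{lem:consistency} to $(\boldsymbol{U},\boldsymbol{P})$ together with the symmetry of $B_h$, decomposing $(\boldsymbol{U},\boldsymbol{P})=(\boldsymbol{U}_I,\boldsymbol{P}_I)+(\boldsymbol{\xi}_U,\boldsymbol{\xi}_P)$, and invoking the error equation~\cref{lem:erroreqn} on the interpolated piece yields the master identity
\begin{equation*}
\norm{\zeta_u}_{\Omega^s}^2 = B_h\bigl((\boldsymbol{\xi}_u,\boldsymbol{\xi}_p),(\boldsymbol{U}_I,\boldsymbol{P}_I)\bigr) + B_h\bigl((\boldsymbol{\zeta}_u,\boldsymbol{\zeta}_p),(\boldsymbol{\xi}_U,\boldsymbol{\xi}_P)\bigr).
\end{equation*}

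The crucial observation---and the mechanism that removes the primal pressure from the final bound---is a cascade of orthogonality relations. The BDM properties~\cref{lem:BDM}~\eqref{lem:BDM-i}--\eqref{lem:BDM-ii} annihilate $b_h^j(\Pi_Q P^j,\xi_u)$; the $L^2$-projection properties of $\Pi_Q,\bar\Pi_Q^j$, combined with $\Pi_V U\cdot n\in P_k(F)$ on each facet, annihilate $b_h^j(\xi_p^j,\Pi_V U)$ and $b_h^{I,j}(\bar\xi_p^j,\bar\Pi_V U)$; symmetric cancellations apply to the $\xi_P$--$\zeta_u$ pairings in the second $B_h$ term. The surviving pressure coupling $\sum_j b_h^{I,j}(\bar\Pi_Q^j P^j,\bar\xi_u)$ is handled by rewriting $\bar\xi_u\cdot n = \xi_u\cdot n + (\Pi_V u - \bar\Pi_V u)\cdot n$ and eliminating the $\xi_u\cdot n$ part via BDM~\eqref{lem:BDM-ii}, bounding the remainder with a trace estimate for $P$ and \cref{eq:l2projectionVbar_b}. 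The only pairing that does not cancel outright is the $\xi_P$--$\xi_u$ volume coupling in $\Omega^d$: using $\nabla\cdot\xi_u = \Pi_Q f^d - f^d$ in $\Omega^d$ (which follows from the primal mass equation and BDM~\eqref{lem:BDM-i}), this term reduces to $\int_K f^d\xi_P$, contributing the $\norm{f^d}_{H^k(\Omega^d)}$ factor in the final estimate. Combining with the continuity~\cref{lem:continuity}, the interpolation estimates~\cref{lem:vs,lem:pj} applied to $(\boldsymbol{\xi}_U,\boldsymbol{\xi}_P)$ using the dual regularity (which supplies the extra factor of $h$), and the energy estimates~\cref{thm:energy_estimate} for $(\boldsymbol{\zeta}_u,\boldsymbol{\zeta}_p)$, every surviving right-hand side term is of the form $Ch^{k+1}\norm{\zeta_u}_{\Omega^s}$ times a primal data norm. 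Dividing through by $\norm{\zeta_u}_{\Omega^s}$ and recombining with the BDM bound on $\xi_u$ then delivers~\cref{eq:L2error-vs}.

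The bound~\cref{eq:L2error-vd} follows analogously, with the dual now probing $\zeta_u$ in $\Omega^d$; the extra assumption $u^d\in\sbr[0]{H^{k+1}(\Omega^d)}^{\rm dim}$ is dictated by the BDM estimate $\norm{\xi_u}_{\Omega^d}\le Ch^{k+1}\norm{u}_{H^{k+1}(\Omega^d)}$. The main obstacle throughout is the careful pressure bookkeeping---every $\norm{p}$-involving term must be shown either to vanish by BDM/$L^2$-projection orthogonality or to collapse to an $f^d$-projection residual---and the sharpening of the residual interface coupling $\sum_j b_h^{I,j}(\bar\Pi_Q^j P^j,\bar\xi_u)$ to the desired order $h^{k+1}$, rather than the naive $h^{k+1/2}$ produced by direct Cauchy--Schwarz.
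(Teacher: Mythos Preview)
Your Stokes estimate~\cref{eq:L2error-vs} is essentially the paper's argument, reorganised: you dualise against $\zeta_u|_{\Omega^s}$ and split the error first, whereas the paper dualises directly against $u-u_h$ and arrives at the single identity~\cref{eq:L2errorvs-3}. Both routes rely on the same BDM/$L^2$-projection orthogonalities and produce the $f^d$ term via the divergence residual in $\Omega^d$, so this part is fine.

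The Darcy estimate~\cref{eq:L2error-vd}, however, does \emph{not} follow ``analogously'' by duality, and this is a genuine gap. The dual problem in the paper is the Stokes--Darcy system with source $f^s=\Psi$ and $f^d=0$; there is no momentum-source slot in the Darcy equation $\kappa^{-1}u+\nabla p=0$, so you cannot simply ``probe $\zeta_u$ in $\Omega^d$'' with a dual of the same form. Even if you modified the system to allow such a source, the regularity assumptions~\cref{regularity} are stated only for $\Psi\in[L^2(\Omega^s)]^{\rm dim}$, and the analogous elliptic regularity for a Darcy-side momentum source is neither assumed nor standard. This is precisely why the paper abandons duality for the Darcy region and instead uses a direct argument: it invokes \cref{lem:w} (from \cite{Girault:2014}) to construct an auxiliary $w\in H^{\rm div}(\Omega^d)$ that is divergence-free, matches $(u^s-u_h^s)\cdot n$ on $\Gamma^I$, and satisfies $\norm{w}_{\Omega^d}\le Ch^{k+1}(\cdots)$. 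One then shows $b_h^d(\boldsymbol{q}_h^d,\Pi_Vu-u_h-\Pi_Vw)=0$, which kills the pressure coupling in~\cref{thm:L2errorvd-1} and reduces the estimate to $\norm{u-u_h}_{\Omega^d}\le\norm{u-\Pi_Vu}_{\Omega^d}+C\norm{w}_{\Omega^d}$. Your proposal misses this construction entirely; without it the Darcy bound does not go through.
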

Different arguments will be used to prove the $L^2$-norm error estimates
for the velocity in the Stokes region \cref{eq:L2error-vs} and in the
Darcy region \cref{eq:L2error-vd}. We therefore consider the proofs of
these two inequalities separately.
\begin{proof}[of Stokes error estimate in \cref{eq:L2error-vs}] Let
  $(U,P)\in X$ be the solution of the dual problem where $\Psi := u -
  u_h$. Setting $\boldsymbol{U} := (U, \gamma(U^s))$, $\boldsymbol{P} :=
  (P, \gamma(P^s),\gamma(P^d))$, \cref{lem:consistency} implies
  \begin{equation*}
    B_h((\boldsymbol{U}, \boldsymbol{P}),(\boldsymbol{v}, \boldsymbol{q})) =
    \int_{\Omega^s} (u-u_h) \cdot v \dif x, \quad\forall (\boldsymbol{v}, \boldsymbol{q})\in \boldsymbol{X}(h).
  \end{equation*}
  Setting $\boldsymbol{v} = (u - u_h, \gamma(u^s)- \bar{u}_h)$,
  $\boldsymbol{q} = (p - p_h, \gamma(p^s)-\bar{p}_h^s, \gamma(p^d)-\bar{p}_h^d)$ in
  this equation,
  \begin{equation}
    \label{eq:L2errorvs-1}
    \norm{u-u_h}^2_{\Omega_s}
    =
    B_h\del[1]{ \del[0]{\boldsymbol{U}, \boldsymbol{P}},
      \del[0]{ \del[0]{ u-u_h,\gamma(u^s)-\bar{u}_h} , \del[0]{ p-p_h,\gamma(p^s)-\bar{p}_h^s, \gamma(p^d)-\bar{p}_h^d} } }.
  \end{equation}
  Next, setting $\boldsymbol{v}_h=(\Pi_V U, \bar{\Pi}_VU)$,
  $\boldsymbol{q}_h=(\Pi_QP,\bar{\Pi}_Q^sP,\bar{\Pi}_Q^dP)$ in
  \cref{eq:erroreqn-1}, we obtain
  \begin{multline}
    \label{eq:L2errorvs-2}
    B_h\del[1]{ \del[0]{ \del[0]{ u-u_h,\gamma(u^s)-\bar{u}_h} , \del[0]{p-p_h,\gamma(p^s)-\bar{p}_h^s,\gamma(p^d)-\bar{p}_h^d} },
       \del[0]{ \del[0]{ \Pi_VU ,\bar{\Pi}_VU}, \del[0]{\Pi_QP,\bar{\Pi}_Q^sP,\bar{\Pi}_Q^dP} } }
     \\
     = 0.
  \end{multline}
  Combining \cref{eq:L2errorvs-1,eq:L2errorvs-2} leads to
  \begin{equation}
    \label{eq:L2errorvs-3}
    \begin{split}
      \norm{u - u_h}^2_{\Omega_s}
      =&B_h\del[1]{ \del[0]{ \boldsymbol{\xi}_U, \boldsymbol{\xi}_P },
        \del[0]{ \del[0]{ u-u_h,\gamma(u^s)-\bar{u}_h}, \del[0]{p-p_h, \gamma(p^s)-\bar{p}_h^s, \gamma(p^d)-\bar{p}_h^d} } }
      \\
      =&a_h(\boldsymbol{\xi}_U, (u-u_h,\gamma(u^s)-\bar{u}_h))
      \\
      &+b_h^s(\boldsymbol{\xi}_P^{s},u-u_h)+b_h^{I,s}(\bar{\xi}_P^s,\gamma(u^s)-\bar{u}_h)
      \\
      &+b_h^d(\boldsymbol{\xi}_P^{d},u-u_h)+b_h^{I,d}(\bar{\xi}_P^d,\gamma(u^s)-\bar{u}_h)
      \\
      &+b_h^s((p-p_h, \gamma(p^s)-\bar{p}_h^s), \xi_U)+b_h^{I,s}(\gamma(p^s)-\bar{p}_h^s, \bar{\xi}_U)
      \\
      &+b_h^d((p-p_h,\gamma(p^d)-\bar{p}_h^d), \xi_U)+b_h^{I,d}(\gamma(p^d)-\bar{p}_h^d, \bar{\xi}_U)
      \\
      =&J_1+\hdots +J_5.
    \end{split}
  \end{equation}
  First, observe that $J_2$ vanishes due to \cref{eq:mass,eq:bc_s},
  single-valuedness of $u$ and
  \cref{eq:massconservation-1,eq:massconservations}. We will bound the
  remaining terms starting with $J_1$. Using \cref{ineq:continuity-ah},
  \begin{equation*}
    J_1\le C\tnorm{\boldsymbol{\xi}_U}_{v'}\tnorm{(u-u_h,\gamma(u^s)-\bar{u}_h)}_{v'}.
  \end{equation*}
  By
  \cref{ineq:vs+,eq:l2projectionVbar_a,} and~\cref{lem:BDM}~\eqref{lem:BDM-iv},
  \begin{multline}
    \label{eq:L2errorvs-4}
      \tnorm{\boldsymbol{\xi}_U}_{v'}^2
      = \tnorm{\boldsymbol{\xi}_U}_{v',s}^2 + \sum_{K\in \mathcal{T}^d}\norm[0]{\xi_U}_K^2
      + \sum_{F\in \mathcal{F}^I}\norm[0]{\bar{\xi}_U^t}_F^2
      \le Ch^{2}\del[1]{ \norm[0]{U}_{H^{2}(\Omega^s)}^2 + \norm[0]{U}_{H^{1}(\Omega^d)}^2 }
      \\
      \le Ch^{2}\norm[0]{u-u_h}_{\Omega^s}^2,
  \end{multline}
  where in the last step we used the regularity assumption
  \cref{regularity}. We find
  \begin{equation*}
    J_1 \le Ch\tnorm{(u - u_h, \gamma(u^s) - \bar{u}_h)}_{v'} \norm{u-u_h}_{\Omega^s}.
  \end{equation*}
  We next bound $J_3$. Recalling
  \cref{eq:mass,eq:massconservation-1,eq:massconservations}, smoothness
  of $u$ and \cref{eq:bc_d}, we have
  \begin{multline*}
      J_3
      = \sum_{K\in \mathcal{T}^d}\int_K\xi_P(f^d-\Pi_Qf^d)\dif x
      \le \norm[0]{\xi_P}_{\Omega^d}\norm[0]{f^d-\Pi_Qf^d}_{\Omega^d}
      \le \tnorm{\boldsymbol{\xi}_P^{d}}_{p,d} \norm[0]{f^d-\Pi_Qf^d}_{\Omega^d}
      \\
      \le Ch^{k+1}\norm[0]{P}_{H^{1}(\Omega^d)}\norm[0]{f^d}_{H^k(\Omega^d)}
      \le Ch^{k+1}\norm[0]{u-u_h}_{\Omega^s}\norm[0]{f^d}_{H^k(\Omega^d)},
  \end{multline*}
  where we used \cref{lem:pj,regularity}.

  For $J_4$, observe that since $\nabla \cdot U = 0$ and $\nabla
  \cdot \Pi_VU \in P_{k-1}(K)$,
  \begin{multline}
    \label{eq:L2errorvs-5}
    \int_K(p-p_h)\nabla \cdot \xi_{U}\dif x
    = \int_K(p-p_h)\nabla \cdot \Pi_VU\dif x
    = \int_K(\Pi_Q p-p_h)\nabla \cdot \Pi_VU\dif x
    \\
    = \int_K(\Pi_Q p-p_h)\nabla \cdot (U-\Pi_VU)\dif x
    = 0,
  \end{multline}
  where we used $\Pi_Qp - p_h \in P_{k - 1}(K)$ and
  \cref{lem:BDM}~\eqref{lem:BDM-i} in the last step. Using
  \cref{eq:L2errorvs-5} and noting that $U$ is smooth,
  \cref{eq:massconservation-2} holds, $\bar{\Pi}_VU$ is single-valued
  and $U=0$ (and so $\bar{\Pi}_VU = 0$) on $\Gamma^s$, we
  have
  \begin{multline*}
      J_4
      = \sum_{K\in \mathcal{T}^s}\int_{\partial K}(\gamma(p^s)-\bar{p}_h^s)(\xi_U-\bar{\xi}_U)\cdot n^s\dif s
      = \sum_{K\in \mathcal{T}^s}\int_{\partial K}\bar{\zeta}_p^s(\xi_U-\bar{\xi}_U)\cdot n^s\dif s
      \\
      \le \del[2]{\sum_{K\in \mathcal{T}^s}h_K\norm[0]{\bar{\zeta}_p^s}_{\partial K}^2}^{1/2}\tnorm{\boldsymbol{\xi}_U}_{v,s}
      \le Ch^{2}\tnorm{\boldsymbol{\zeta}_p^{s}}_{p,s}\norm{u-u_h}_{\Omega^s},
  \end{multline*}
  where we applied \cref{eq:L2errorvs-4}.

  Finally we bound $J_5$. Using \cref{eq:L2errorvs-5} and recalling that
  $U$ is smooth, \cref{eq:massconservation-2} holds, $\bar{\Pi}_VU$ is
  single-valued and $U\cdot n=0$ (and so $\Pi_VU\cdot n=0)$ on
  $\Gamma^d$,
  \begin{equation*}
    \begin{split}
      J_5
      & = \int_{\Gamma^I}(\gamma(p^d)-\bar{p}_h^d)(\xi_U-\bar{\xi}_U)\cdot n^d\dif s
      = \int_{\Gamma^I}\bar{\zeta}_p^d(\xi_U-\bar{\xi}_U)\cdot n^d\dif s
      \\
      & \le \del[2]{\sum_{K\in \mathcal{T}^d}h_K\norm[0]{\bar{\zeta}_p^d}_{\partial K}^2}^{1/2}
     \tnorm{\boldsymbol{\xi}_U}_{v,s} \le  Ch^{2}\tnorm{\boldsymbol{\zeta}_p^{d}}_{p,d}\norm[0]{u-u_h}_{\Omega^s},
    \end{split}
  \end{equation*}
  where we used \cref{eq:L2errorvs-4}. The result follows by combining
  the bounds for $J_1, \hdots, J_5$, cancelling a factor of
  $\norm{u - u_h}_{\Omega^s}$, using \cref{thm:energy_estimate} and
  the equivalence of $\tnorm{\cdot}_v$ and $\tnorm{\cdot}_{v'}$ on
  $\boldsymbol{V}_h$. \qed
\end{proof}

To prove \cref{eq:L2error-vd} we will treat the problem in the Darcy
region as a separate problem where the interface conditions are
treated as boundary conditions. We will first state a result which
constructs a vector function in
\begin{equation*}
  H^{\rm div}(\Omega^d) = \{v\in L^2(\Omega^d): \nabla \cdot v\in L^2(\Omega^d)\},
\end{equation*}
such that its normal component on the interface matches the normal
component of the error $u - u_h$ on the interface.
\begin{lemma}
  \label{lem:w}
  Assume that $u^s\in \sbr[0]{H^{k+1}(\Omega^s)}^{\rm dim}$ and $u^d\in
  \sbr[0]{H^{k}(\Omega^d)}^{\rm dim}$ with $k \ge 1$. Then there exists
  a function $w \in H^{\rm div}(\Omega^d)$ that satisfies $\nabla \cdot
  w=0$ in $\Omega^d$, $w\cdot n=0$ on $\Gamma^d$ and $w\cdot
  n=(u^s - u_h^s)\cdot n$ on $\Gamma^I$ such that
  \begin{equation}
    \label{ineq:w}
    \norm{w}_{\Omega^d}
    \le  C h^{k+1}(\norm[0]{u}_{H^{k+1}(\Omega^s)} + \norm[0]{u}_{H^k(\Omega^d)}
      + \norm[0]{f^d}_{H^{k}(\Omega^d)}),
  \end{equation}
  where $C > 0$ is a constant independent of $h$.
\end{lemma}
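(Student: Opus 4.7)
The plan is to construct $w$ as the gradient of a harmonic scalar potential on $\Omega^d$. Let $\phi\in H^1(\Omega^d)\cap L_0^2(\Omega^d)$ be the weak solution of the Neumann problem
\begin{equation*}
-\Delta\phi=0\ \text{in}\ \Omega^d,\quad \partial_n\phi=0\ \text{on}\ \Gamma^d,\quad \partial_n\phi=(u^s-u_h^s)\cdot n\ \text{on}\ \Gamma^I,
\end{equation*}
and set $w:=\nabla\phi$. Then $w\in H^{\rm div}(\Omega^d)$ with $\nabla\cdot w=0$ and the required normal traces on $\Gamma^d$ and $\Gamma^I$. The well-posedness of this Neumann problem requires the compatibility condition $\int_{\Gamma^I}(u^s-u_h^s)\cdot n\dif s=0$, which I would verify by applying the divergence theorem cell-by-cell on $\Omega^s$: the pointwise identities $\nabla\cdot u=\nabla\cdot u_h=0$ in $\Omega^s$ (from~\cref{eq:mass,eq:massconservation-1}), the $H(\mathrm{div})$-conformity of $u_h$ across interior facets and its vanishing normal trace on $\Gamma^s$ (both contained in~\cref{eq:massconservation-2}), and the boundary condition $u\cdot n=0$ on $\Gamma^s$ together yield $\int_{\Gamma^I}u^s\cdot n\dif s=\int_{\Gamma^I}u_h^s\cdot n\dif s=0$.

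The norm estimate is then obtained by duality. Integration by parts on $\Omega^d$ gives
\begin{equation*}
\|\nabla\phi\|_{\Omega^d}^2=\int_{\Gamma^I}\phi\,(u^s-u_h^s)\cdot n\dif s.
\end{equation*}
I would extend $\phi$ to $\tilde\phi\in H^1(\Omega^s)$ with $\tilde\phi|_{\Gamma^I}=\phi$, $\tilde\phi=0$ on $\Gamma^s$, and $\|\nabla\tilde\phi\|_{\Omega^s}\le C\|\nabla\phi\|_{\Omega^d}$ (via the trace theorem and Poincar\'e's inequality applied to the mean-zero $\phi$). A cell-wise integration by parts on $\Omega^s$---using once more that $u$ and $u_h$ are divergence-free there, the smoothness of $u$, the $H(\mathrm{div})$-conformity of $u_h$, and the vanishing of $\tilde\phi$ on $\Gamma^s$---converts the surface integral on $\Gamma^I$ into a volume integral, yielding
\begin{equation*}
\|\nabla\phi\|_{\Omega^d}^2=\int_{\Omega^s}(u-u_h)\cdot\nabla\tilde\phi\dif x\le \|u-u_h\|_{\Omega^s}\|\nabla\tilde\phi\|_{\Omega^s}\le C\|u-u_h\|_{\Omega^s}\|\nabla\phi\|_{\Omega^d}.
\end{equation*}
Dividing by $\|\nabla\phi\|_{\Omega^d}$ gives $\|w\|_{\Omega^d}\le C\|u-u_h\|_{\Omega^s}$, and~\cref{ineq:w} follows from the Stokes $L^2$ estimate~\cref{eq:L2error-vs}, whose regularity hypotheses coincide with those of the lemma.

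The main obstacle will be the careful bookkeeping in the cell-wise integration by parts on $\Omega^s$: the discontinuity of $u_h$ forces us to exploit both its pointwise divergence-freeness \emph{and} its $H(\mathrm{div})$-conformity to cancel all interior facet contributions, while the choice $\tilde\phi|_{\Gamma^s}=0$ eliminates the only otherwise nonzero boundary term there. A subsidiary subtlety is the discrete compatibility condition for the Neumann problem, which rests on the same structural mass-conservation properties of the scheme rather than on any error estimate.
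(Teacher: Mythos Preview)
Your proposal is correct and follows essentially the same route the paper indicates: the paper does not prove \cref{lem:w} in full but defers to \cite[Lemmas~3.1--3.2]{Girault:2014}, noting that the argument rests on pointwise mass conservation~\cref{eq:massconservation-1}, $H(\mathrm{div})$-conformity~\cref{eq:massconservations}, and the Stokes $L^2$ estimate~\cref{eq:L2error-vs}---precisely the ingredients you invoke via the harmonic Neumann potential and the duality/extension argument. One small simplification: you do not actually need $\tilde\phi=0$ on $\Gamma^s$ (which raises a mild trace-compatibility question at the corners $\overline{\Gamma^I}\cap\overline{\Gamma^s}$); since $u\cdot n=0$ and $u_h\cdot n=0$ on $\Gamma^s$ anyway, any bounded $H^1$ extension of $\phi$ from $\Gamma^I$ suffices to make the $\Gamma^s$ boundary contribution vanish in the cell-wise integration by parts.
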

We point out that the proof of \cref{lem:w} relies on pointwise mass
conservation \cref{eq:massconservation-1}, $H(\text{div})$-conformity
\cref{eq:massconservations} and~\cref{eq:L2error-vs}, and can be found
in~\cite[Lemma 3.1, Lemma 3.2]{Girault:2014}. We now prove
\cref{eq:L2error-vd}.
\begin{proof}[of the Darcy error estimate in \cref{eq:L2error-vd}]
  In \cref{eq:erroreqn-1} set $v_h=0$ in $\Omega^s$, $\bar{v}_h = 0$ and
  $\boldsymbol{q}_h = 0$. Then
  \begin{equation}
    \label{thm:L2errorvd-1}
    a_h^d(u-u_h,v_h) = -b_h^d((p-p_h, \gamma(p) - \bar{p}_h^d), v_h)
     = -b_h^d((\Pi_Q p - p_h,\bar{\Pi}_Q p - \bar{p}_h), v_h)
  \end{equation}
  by the projection properties of $\Pi_Q$ and $\bar{\Pi}_Q$.

  We will first determine a $v_h$ so that $a_h^d((u - u_h, v_h)) = 0$.
  Setting $q_h = 0$ in $\Omega^s$, $\bar{q}_h^s = 0$ and
  $\boldsymbol{v}_h = 0$ in \cref{eq:erroreqn-1}, using
  \cref{lem:BDM}~\eqref{lem:BDM-i} and \eqref{lem:BDM-ii}, we find
  \begin{equation}
    \label{eq:bhdbhId_1}
    b_h^d(\boldsymbol{q}_h^{d},\Pi_Vu-u_h)
    = b_h^d((q_h,\bar{q}_h^d), u-u_h)
    = \int_{\Gamma_I}\bar{q}_h^d\del{\gamma(u^d) - u_h^d}\cdot n^d \dif s,
  \end{equation}
  where the second equality is by
  \cref{eq:mass,eq:massconservation-1}. Next, let
  $w\in H^{\rm div}(\Omega^d)$ such that $\nabla\cdot w=0$ in
  $\Omega^d$, $w\cdot n=0$ on $\Gamma^d$,
  $w\cdot n^d= (\gamma(u^s)-u_h^s)\cdot n^d$ on $\Gamma^I$ as defined
  in \cref{lem:w}. Then, by \cref{lem:BDM}~\eqref{lem:BDM-i} and
  \eqref{lem:BDM-ii}
  \begin{equation}
    \label{eq:bhdbhId_2}
    b_h^d(\boldsymbol{q}_h^{d}, \Pi_Vw)
    = b_h^d(\boldsymbol{q}_h^{d},w)
    =\int_{\Gamma^I}\bar{q}_h^d(\gamma(u^s)-u_h^s)\cdot n^d \dif s,
  \end{equation}
  where the second equality is by
  \cref{eq:bc_d,eq:massconservation-2}. Combining \cref{eq:bhdbhId_1}
  and \cref{eq:bhdbhId_2}, and using
  \cref{eq:bc_I_u,eq:massconservation-4}, we find that
  \begin{equation}
    b_h^d(\boldsymbol{q}_h^{d},\Pi_Vu-u_h-\Pi_Vw)=0 \quad \forall \boldsymbol{q}_h^{d} \in \boldsymbol{Q}_h^d.
  \end{equation}
  Therefore, setting $v_h=\Pi_Vu-u_h-\Pi_Vw$ in
  \cref{thm:L2errorvd-1}, we find
  \begin{equation}
    \label{eq:ahdzerovh}
    0=a_h^d(u-u_h,\Pi_Vu-u_h-\Pi_Vw)=a_h^d(u-u_h,(\Pi_Vu-u)+(u-u_h)-\Pi_V w).
  \end{equation}
  Next, by definition of $a_h^d(\cdot, \cdot)$ and using
  \cref{eq:ahdzerovh}, we find that
  \begin{equation*}
    \begin{split}
      \kappa^{-1}\norm{u-u_h}_{\Omega^d}^2
      &= a_h^d(u-u_h,u-u_h)
      = a_h^d(u-u_h,u-\Pi_Vu+\Pi_Vw)
      \\
      & \le \kappa^{-1}\norm{u-u_h}_{\Omega^d}\norm{u-\Pi_Vu+\Pi_Vw}_{\Omega^d}.
    \end{split}
  \end{equation*}
  Hence, cancelling a factor of $\kappa^{-1}\|u-u_h\|_{\Omega^d}$ above
  and using \cref{lem:BDM}~\eqref{lem:BDM-iv} and \cref{ineq:w},
  \begin{equation*}
    \begin{split}
      \norm{u-u_h}_{\Omega^d}
      &\le \norm{u-\Pi_Vu}_{\Omega^d} + \norm{\Pi_Vw}_{\Omega^d}
      \le \norm{u-\Pi_Vu}_{\Omega^d} + C\norm{w}_{\Omega^d}
      \\
      &\le Ch^{k+1}\norm{u}_{H^{k+1}(\Omega^d)}
      + Ch^{k+1}\del[1]{\norm[0]{u}_{H^{k+1}(\Omega^s)} + \norm{u}_{H^{k}(\Omega^d)} + \norm[0]{f^d}_{H^k(\Omega^d)}}
      \\
      &\le Ch^{k+1}\del[1]{\norm[0]{u}_{H^{k+1}(\Omega^s)} + \norm{u}_{H^{k+1}(\Omega^d)} + \norm[0]{f^d}_{H^k(\Omega^d)}}.
    \end{split}
  \end{equation*}
  The result follows. \qed
\end{proof}

We note that the error estimates for the velocity in
\cref{thm:energy_estimate} and \cref{thm:L2error-vs-vd} do not depend
on the approximation error of the pressure. The EDG--HDG method
\cref{eq:hdgwf} for the Stokes--Darcy system is therefore
\emph{pressure-robust} \cite{John:2017}. We furthermore remark that
the analysis in this paper is easily extended to spatially dependent
permeability that varies continuously over $\Omega^d$ under the
condition that
$0 < \kappa_{\min} \le \kappa \le \kappa_{\max} < \infty$.

\section{Numerical examples}
\label{sec:numerical_examples}

The examples in this section have been implemented using the NGSolve
finite element library~\citep{Schoberl:2014}. Both examples are posed
on the domain $\Omega = [0, 1]^2$ with
$\Omega^d = [0, 1]\times[0, 0.5]$ and
$\Omega^s = [0, 1]\times [0.5, 1]$. The domain is discretized by an
unstructured simplicial mesh. The mesh is such that $\mathcal{T}^s$ is
an exact triangulation of $\Omega^s$, $\mathcal{T}^d$ is an exact
triangulation of $\Omega^d$, and cell facets match on the interface
$\Gamma^I$. We set $k=3$ in \cref{eq:DGcellspaces} and
\cref{eq:DGfacetspaces} and set the penalty parameter to be
$\beta = 10k^2$.

\subsection{Rates of convergence}
\label{ss:testing_rates_convergence}
We solve the Stokes--Darcy system \cref{eq:system,eq:interface} with the
source terms and boundary conditions chosen such that the exact solution
is given by
\begin{subequations}
  \label{eq:exactsolution}
  \begin{align}
    u|_{\Omega^s} &=
      \begin{bmatrix}
        -\sin(\pi x_1)\exp(x_2/2)/(2\pi^2)
        \\
        \cos(\pi x_1)\exp(x_2/2)/\pi
      \end{bmatrix},
    &
      p|_{\Omega^s} &= \frac{\kappa\mu-2}{\kappa\pi}\cos(\pi x_1)\exp(x_2/2),
    \\
    u|_{\Omega^d} &=
                    \begin{bmatrix}
                      -2\sin(\pi x_1)\exp(x_2/2)
                      \\
                      \cos(\pi x_1)\exp(x_2/2)/\pi
                    \end{bmatrix},
                  &
                    p|_{\Omega^d} &= -\frac{2}{\kappa\pi}\cos(\pi x_1)\exp(x_2/2),
  \end{align}
\end{subequations}
with $\alpha = \mu \kappa^{1/2}(1 + 4\pi^2)/2$. For $\mu = 1$ and
$\kappa = 1$ we recover the test case introduced in~\cite{Correa:2009}.
We introduce this extended version to investigate the effect of $\mu$
and $\kappa$ on the solution.

\Cref{tab:ratesconvpresrob_p3} presents computed errors in the
velocity and the pressure on $\Omega^s$ and on $\Omega^d$ for
different values of $\mu$ and $\kappa$.  We observe optimal rates of
convergence for all unknowns, independent of the value of the
viscosity and permeability. We observe also that, although the error
in the pressure changes when viscosity and permeability changes, the
error in the velocity remains the same, as predicted by the
analysis. Furthermore, pointwise satisfaction of the mass equation is
obtained in both regions. Although not shown here, these observations
hold also for other values of $k$ in \cref{eq:DGcellspaces} and
\cref{eq:DGfacetspaces}.

\begin{table}
  \small
  \centering {
    \begin{tabular}{cccccc}
      \hline\hline
      \multicolumn{6}{c}{Stokes solution ($\Omega^s$)} \\
      \hline
      Cells
      & $\norm{u_h - u}_{\Omega^s}$ & Rate & $\norm{p_h - p}_{\Omega^s}$
      & Rate & $\norm{\nabla \cdot u_h}_{\Omega^s}$ \\
      \hline
      \hline
      \multicolumn{6}{l}{$\mu=1$, $\kappa=1$} \\
   152 & 1.7e-6 & 4.8 & 2.9e-4 & 3.6 & 2.3e-14 \\
   578 & 8.4e-8 & 4.3 & 3.6e-5 & 3.0 & 4.8e-14 \\
  2416 & 4.4e-9 & 4.3 & 4.5e-6 & 3.0 & 9.3e-14 \\
  9580 & 2.3e-10 & 4.3 & 5.6e-7 & 3.0 & 1.9e-13 \\
      \multicolumn{6}{l}{$\mu=10^{-6}$, $\kappa=1$} \\
   152 & 9.0e-6 & 7.6 & 5.1e-5 & 3.2 & 7.5e-12 \\
   578 & 1.2e-7 & 6.3 & 6.1e-6 & 3.1 & 7.0e-12 \\
  2416 & 4.5e-9 & 4.7 & 7.4e-7 & 3.0 & 7.0e-12 \\
  9580 & 2.3e-10 & 4.3 & 9.0e-8 & 3.0 & 6.9e-12 \\
      \multicolumn{6}{l}{$\mu=1$, $\kappa=10^3$} \\
   152 & 1.7e-6 & 4.8 & 2.9e-4 & 3.6 & 2.3e-14 \\
   578 & 8.4e-8 & 4.3 & 3.6e-5 & 3.0 & 4.8e-14 \\
  2416 & 4.4e-9 & 4.3 & 4.5e-6 & 3.0 & 9.3e-14 \\
  9580 & 2.3e-10 & 4.3 & 5.6e-7 & 3.0 & 1.9e-13 \\
      \multicolumn{6}{l}{$\mu=10^{-6}$, $\kappa=10^3$} \\
   152 & 1.6e-6 & 4.8 & 5.1e-8 & 3.2 & 1.3e-14 \\
   578 & 8.3e-8 & 4.3 & 6.1e-9 & 3.1 & 1.4e-14 \\
  2416 & 4.4e-9 & 4.3 & 7.4e-10 & 3.0 & 5.6e-14 \\
  9580 & 2.3e-10 & 4.3 & 9.0e-11 & 3.0 & 4.9e-14 \\
      \hline\hline
      \multicolumn{6}{c}{Darcy solution ($\Omega^d$)} \\
      \hline
      Cells
      & $\norm{u_h - u}_{\Omega^d}$ & Rate & $\norm{p_h - p}_{\Omega^d}$
      & Rate & $\norm{\nabla \cdot u_h+\Pi_Q f^d}_{\Omega^d}$ \\
      \hline
      \hline
      \multicolumn{6}{l}{$\mu=1$, $\kappa=1$} \\
   152 & 3.1e-6 & 4.7 & 3.5e-5 & 3.4 & 1.2e-12 \\
   578 & 2.0e-7 & 4.0 & 4.7e-6 & 2.9 & 3.7e-12 \\
  2416 & 1.2e-8 & 4.1 & 5.8e-7 & 3.0 & 1.5e-11 \\
  9580 & 7.5e-10 & 4.0 & 7.3e-8 & 3.0 & 6.0e-11 \\
      \multicolumn{6}{l}{$\mu=10^{-6}$, $\kappa=1$} \\
   152 & 3.0e-6 & 4.7 & 3.5e-5 & 3.4 & 9.4e-13 \\
   578 & 2.0e-7 & 3.9 & 4.7e-6 & 2.9 & 3.6e-12 \\
  2416 & 1.2e-8 & 4.1 & 5.8e-7 & 3.0 & 1.5e-11 \\
  9580 & 7.5e-10 & 4.0 & 7.3e-8 & 3.0 & 5.9e-11 \\
      \multicolumn{6}{l}{$\mu=1$, $\kappa=10^3$} \\
   152 & 3.1e-6 & 4.7 & 3.5e-8 & 3.4 & 2.1e-12 \\
   578 & 2.0e-7 & 4.0 & 4.7e-9 & 2.9 & 3.6e-12 \\
  2416 & 1.2e-8 & 4.1 & 5.8e-10 & 3.0 & 1.0e-10 \\
  9580 & 7.5e-10 & 4.0 & 7.3e-11 & 3.0 & 6.2e-11 \\
      \multicolumn{6}{l}{$\mu=10^{-6}$, $\kappa=10^3$} \\
   152 & 3.1e-6 & 4.7 & 3.5e-8 & 3.4 & 1.9e-12 \\
   578 & 2.0e-7 & 3.9 & 4.7e-9 & 2.9 & 3.6e-12 \\
  2416 & 1.2e-8 & 4.1 & 5.8e-10 & 3.0 & 1.4e-10 \\
  9580 & 7.5e-10 & 4.0 & 7.3e-11 & 3.0 & 6.3e-11 \\
      \hline
    \end{tabular}
  } \caption{Errors and rates of convergence in $\Omega^s$ (top half)
    and $\Omega^d$ (bottom half) for the velocity and pressure fields
    using polynomial degree $k=3$ for the test case described in
    \cref{ss:testing_rates_convergence}.}
  \label{tab:ratesconvpresrob_p3}
\end{table}

\subsection{Coupled surface and subsurface flow}
\label{ss:surface_subsurface}

In this section we consider a test case similar to that proposed
in~\cite[Example~7.2]{Vassilev:2009}. This test case is representative
of surface flow coupled to subsurface flow.

Let the boundary of the Stokes region be partitioned as
$\Gamma^s = \Gamma^s_1\cup\Gamma_2^s\cup\Gamma_3^s$ where
$\Gamma^s_1 :=\cbr{ x \in\Gamma^s\ :\ x_1=0 }$,
$\Gamma^s_2 :=\cbr{ x \in\Gamma^s\ :\ x_1=1 }$ and
$\Gamma^s_3 :=\cbr{ x \in\Gamma^s\ :\ x_2=1 }$. Similarly, let
$\Gamma^d = \Gamma^d_1\cup\Gamma_2^d$ where
$\Gamma^d_1 :=\cbr{ x \in\Gamma^d\ :\ x_1=0 \ \text{or} \ x_1=1}$ and
$\Gamma^d_2 :=\cbr{ x \in\Gamma^d\ :\ x_2=0 }$. We impose the
following boundary conditions:
\begin{align*}
  u &= (x_2(3/2-x_2)/5, 0) && \text{on}\ \Gamma_1^s, \\
  \del{-2\mu\varepsilon(u) + p\mathbb{I}}n &= 0 && \text{on}\ \Gamma_2^s, \\
  u\cdot n &= 0 \ \text{and}\ \del{-2\mu\varepsilon(u) + p\mathbb{I}}^t = 0 && \text{on}\ \Gamma_3^s, \\
  u\cdot n &= 0 && \text{on}\ \Gamma_1^d, \\
  p &= -0.05 && \text{on}\ \Gamma_2^d.
\end{align*}
Note that due to the boundary condition on the pressure on $\Gamma_2^d$
the pressure solution need not be in $L^2_0(\Omega)$ as imposed in
\cref{eq:DGcellspaces}. We set the permeability to
\begin{equation*}
  \kappa = 700 (1 + 0.5 (\sin(10\pi x_1)\cos(20\pi x_2^2)
    + \cos^2(6.4 \pi x_1)\sin(9.2\pi x_2))) + 100.
\end{equation*}
Other parameters in \cref{eq:system,eq:interface} are set as $\mu =
0.1$, $\alpha=0.5$, $f^s=0$ and $f^d=0$. We consider the solution on a
mesh with 5754 simplicial cells.

\Cref{fig:contaminant_transport_conduc_vel} shows the permeability,
computed velocity field, and computed pressure field. We observe a
similar flow field as presented in~\cite[Example~7.2]{Vassilev:2009}. In
the Darcy region $\Omega^d$ the flow field avoids areas of low
permeability, free-flow is present in the Stokes region $\Omega^s$,
while the tangential velocity is discontinuous along the interface. To
plot the pressure we used two colour scales to illustrate the pressure
differences in the Darcy and the Stokes region. We note that the
pressure is discontinuous across the interface. Furthermore, the
pressure in the Darcy region follows a similar pattern as the
permeability while in the Stokes region pressure is highest at the
inlet.

\begin{figure}
  \centering
  \subfloat[The permeability. \label{fig:tc_contaminant_hc}]{\includegraphics[width=0.48\textwidth]{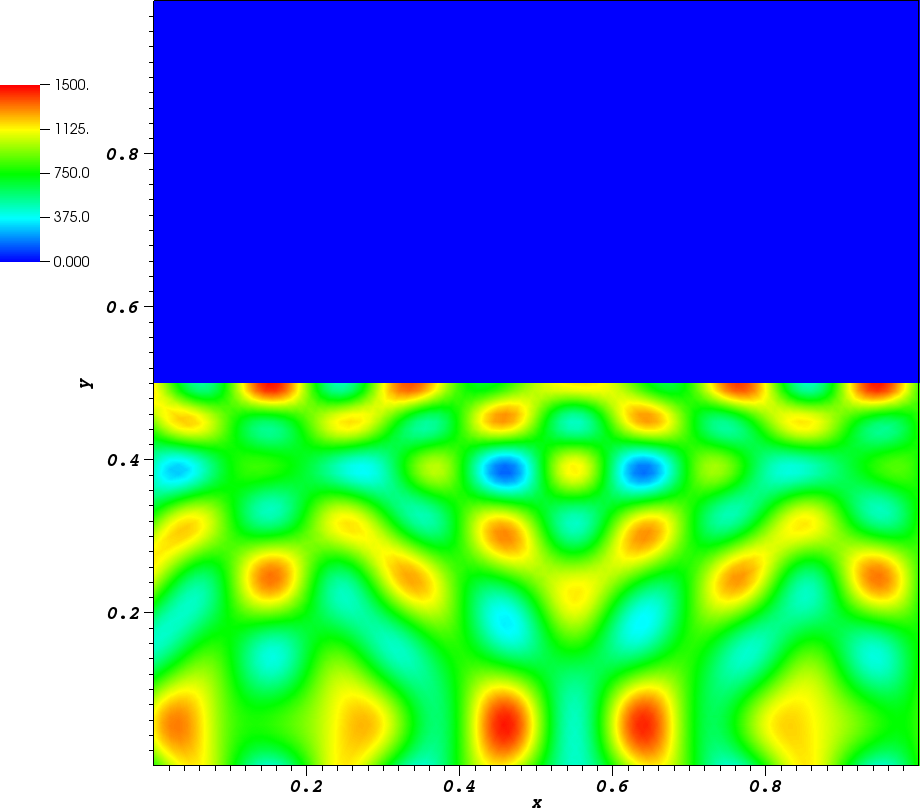}}
  \quad
  \subfloat[The velocity field. \label{fig:tc_contaminant_vel}]{\includegraphics[width=0.48\textwidth]{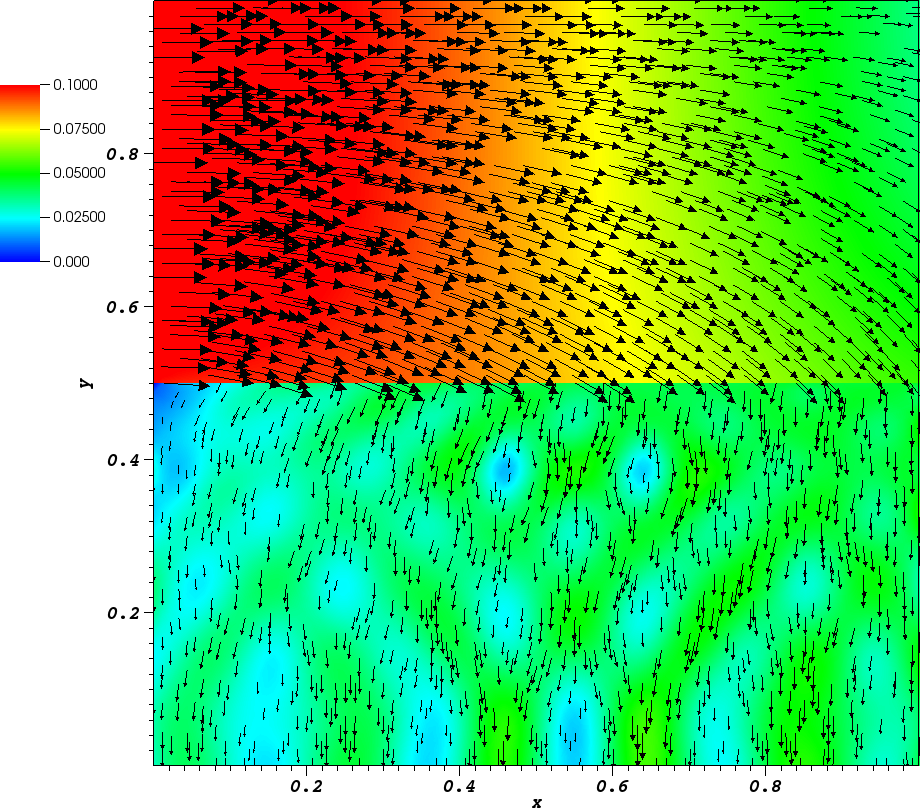}}
  \\
  \subfloat[The pressure field. \label{fig:tc_contaminant_pres}]{\includegraphics[width=0.48\textwidth]{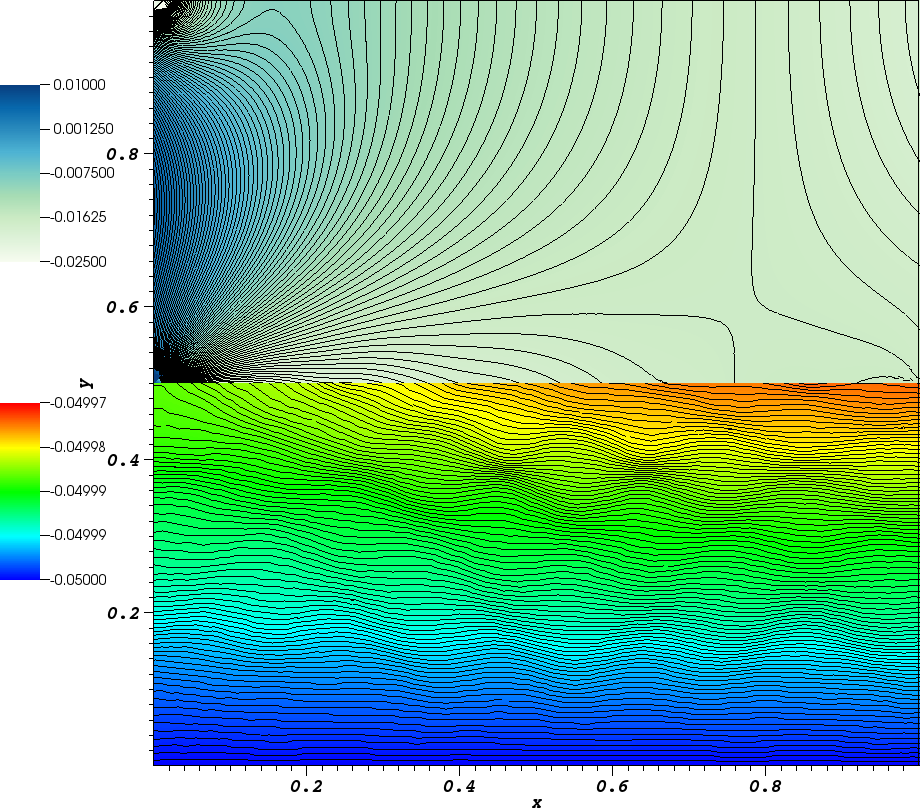}}
  \caption{The permeability, computed velocity field, and computed
    pressure field for the test case described
    in~\cref{ss:surface_subsurface}.}
  \label{fig:contaminant_transport_conduc_vel}
\end{figure}

\section{Conclusions}
\label{sec:conclusions}

We have formulated a pointwise mass-conserving and divergence-conforming
embedded--hybridized discontinuous Galerkin method for the Stokes--Darcy
system. The Stokes equations are coupled to the Darcy equations at an
interface by the Beavers--Joseph--Saffman condition. This coupling is
handled naturally by the facet variables that occur in the EDG--HDG
method and act as Lagrange multipliers enforcing divergence-conformity
of the scheme. We analyzed the embedded--hybridized method and proved
existence and uniqueness of the solution to the discretization.
Additionally, we presented \emph{a priori} error analysis showing
optimal rates of convergence and demonstrated that the error in the
velocity is independent of the pressure. Numerical examples support the
analysis of the scheme.

\subsubsection*{Acknowledgements}

SR gratefully acknowledges support from the Natural Sciences and
Engineering Research Council of Canada through the Discovery Grant
program (RGPIN-05606-2015) and the Discovery Accelerator Supplement
(RGPAS-478018-2015). Part of this research was done while AC was on
sabbatical at the University of Waterloo, Department of Applied
Mathematics.

\bibliographystyle{elsarticle-num-names}
\bibliography{references}
\end{document}